\newcommand{\ncal}{{\mathcal N}}
\newcommand{\Ss}{{\mathbb S}}
\newcommand{\PP}{{\mathbb P}}
\newcommand{\R}{{\mathbb R}}
\newcommand{\C}{{\mathbb C}}
\newcommand{\Z}{{\mathbb Z}}
\newcommand{\CP}{\C\PP}
\renewcommand{\phi}{\varphi}
\newcommand{\hcal}{\mathcal{H}}
\newcommand{\zcal}{\mathcal{Z}}
\newtheorem{theorem}{Theorem}[section]
\newtheorem{proposition}[theorem]{Proposition}
\newtheorem{lemma}[theorem]{Lemma}
\newtheorem{remark}[theorem]{Remark}
\newtheorem{definition}[theorem]{Definition}
\numberwithin{equation}{section}
\title{$2$-nodal domain theorems  for higher dimensional circle bundles}
\author{Junehyuk Jung}
\address{Department of Mathematics, Brown University, Providence, RI 02912 USA}
\email{junehyuk{\textunderscore}jung@brown.edu}
\author{Steve Zelditch}
\address{Department of Mathematics, Northwestern  University, Evanston, IL 60208, USA}
\email{zelditch@math.northwestern.edu}
\thanks{S. Z. was partially supported by NSF grant  DMS-1810747. J.J. was supported by NSF grant DMS-1900993, and by Sloan Research Fellowship.}
\begin{document}

\begin{abstract} We prove that the real parts of equivariant (but non-invariant) eigenfunctions of generic bundle metrics on nontrivial principal
$S^1$ bundles over manifolds of any  dimension have connected nodal sets and exactly 2 nodal domains. This generalizes earlier
results of the authors in the 3-dimensional case. The failure of the results on for  non-free $S^1$ actions is illustrated  on  even dimensional spheres
by one-parameter subgroups of rotations whose fixed point set consists of two antipodal points.
\end{abstract}

\maketitle

\section{Introduction}

In a recent article \cite{JZ20}, the authors proved a novel type of connectivity theorem for nodal sets of eigenfunctions
of certain Laplace operators on Riemannian  3-manifolds $(X, G)$ which are nontrivial principal circle bundles $\pi: X \to M$  over (real) surfaces $M$.
The
metrics $G$ are known as `bundle metrics', Kaluza-Klein metrics  or `connection metrics'
\footnote{They are also called  Sasakian metrics; see \cite{BBB} for an early study of them.}.
 They are induced by a choice of Riemannian metric $g$ on $M$ and a connection 1-form $\alpha$ on the nontrivial principal  circle bundle $\pi: X \to M$; see Lemma \ref{GLEM}. We denote the free circle $S^1$ action by $r_{\theta}$ and its infinitesimal generator by $Z = \frac{\partial}{\partial \theta}$. The connection determines an $S^1$ invariant splitting $T X = H \oplus V$ into horizontal
and vertical sub-bundles,  with $V = \R Z$, and $G$ is defined so that $H \perp V$, so that $G |_{H} = \pi^* g$ and so that $G |_{\pi^{-1}(z)}$ is the standard metric on $S^1 = \mathbb{R} /\Z$ along the fibers. The horizontal Laplacian $\Delta_H$  commutes with the vertical Laplacian $\frac{\partial^2}{\partial \theta^2}$, so that there exists an orthonormal basis  $\{\phi_{m, j}\}$ of joint eigenfunctions of $\Delta_G$ and  $\frac{\partial^2}{\partial \theta^2}$. We refer to such joint eigenfunctions
as ``equivariant eigenfunctions'' since they transform by $\phi_{m, j}(r_{\theta} x) = e^{i m \theta} \phi_{m, j}(x)$ under the $S^1$
action $r_{\theta}: S^1 \times X \to X$. They are called invariant when $m=0$.  It is proved  in \cite{JZ20} that the nodal sets of the real parts of non-invariant equivariant eigenfunctions are
connected for $m \not=0$ and that they divide $S^3$ into exactly two nodal domains if $0$ is a regular value of the eigenfunction; moreover, it is
a generic property of the bundle metrics that all eigenfunctions are  equivariant and that $0$ is a regular value of them all.   Until this result, sequences of
 eigenfunctions with a small number of nodal domains seemed to be rare, and it took ingenious arguments such as those of Lewy \cite{Lew77}  and Stern
\cite{CH2}  to construct them; but the result of \cite{JZ20} shows that the full orthonormal basis of eigenfunctions orthogonal to invariant functions
has this property for generic bundle metrics.
\subsection{Higher dimensional circle bundles}
The purpose of this Addendum is to generalize the  result to bundle Laplacians on  nontrivial principal circle bundles over manifolds of any dimension. The generalization requires two main  changes in the argument. First, our generic bundle metrics are of two kinds (i) general base metric perturbations, and (ii) perturbations of the connection $1$-form as in \cite{JZ20}. The perturbations (i) bring in new types of equations. Secondly, the key last step   that the single nodal
 component  divides $X$ into
precisely two nodal domains requires a very different argument from \cite{JZ20}. In addition, we illustrate what goes wrong with the proofs if the $S^1$
action is not free by illustrating the failure of the result on even-dimensional spheres.

Bundle metrics on  principal circle bundles can be defined in both a `top down' and a `bottoms up'  way. The top down approach is to start
with a principal $S^1$ bundle $\pi: X \to M$ and to construct a bundle metric $G$ on it from a metric $g$ on $M$ and a connection 1-form
$\alpha$ on the bundle  (see Section \ref{GEOMSECT}).
The $S^1$ action is generated by  a Killing vector field $Z = \frac{\partial}{\partial \theta}$, which commutes with any bundle Laplacian $\Delta_G$.
The norm $|Z|_G$ is called the lapse function; it is constant along fibers but  may  vary with the fiber. For  simplicity, when the $S^1$ action is
free,  we assume it is constant, in which case the fibers are geodesics. The bottoms up approach is to start with a suitable Riemannian manifold $(M, g)$ and a complex line bundle $L \to M$, to endow $L$ with a Hermitian metric
$h$ and to define $X$ as the unit bundle $\partial D^*_h \subset L^*$ of the Hermitian metric in the dual line bundle.  Given the circle bundle $X$
and a character of $S^1$, one may define the associated complex line bundle $L$ and Hermitian metric $h$ so that $X = \partial D^*_h $. Although the bottoms up and top down approaches are equivalent,
it is natural to separate the two approaches since the lead to different perturbation formulae.

\begin{definition}\label{EQEIGDEF}
Let $(X,G)$ be any Riemannian manifold with an $S^1$ action by isometries. Let $D_Z = \frac{1}{i} \frac{\partial}{\partial \theta} $ be the self-adjoint differential operator corresponding to the Killing vector field  $Z$ generating the $S^1$ action.  By an equivariant eigenfunction we mean
a joint eigenfunction
\[
\left\{ \begin{array}{l} \Delta_G \phi_{m, j} = - \lambda_{m, j}^2 \phi_{m, j}, \\
D_Z \phi_{m, j} = m \phi_{m, j}. \end{array} \right.
\]
\end{definition}

Equivariant eigenfunctions are complex-valued and their real and imaginary parts are (non-equivariant) $\Delta_G$-eigenfunctions. We are mainly interested
in the nodal sets of the real part,
\begin{equation} \label{NCALmjDEF}
\ncal_{m,j}: = \{\mathrm{Re} \phi_{m,j} =0\}.
\end{equation}
It is equivalent to study the nodal set of the imaginary part.

Our main result pertains to nodal sets \eqref{NCALmjDEF} for Laplacians of  generic bundle metrics. To define `generic',
we specify a class of metrics  $g$ on $M$  (Section \ref{GENMET})  and of connection $1$-forms $ \alpha$ (Section \ref{VARYALPHA})  in some Banach or Frechet space, and `generic' will refer to a residual
subset of that space. Either we fix the base metric $g$ and vary the connection, or we fix the connection and allow general variations
of base metrics.

\begin{theorem} \label{MAIN}
For  generic data $(g, \alpha)$ of a bundle (Kaluza-Klein)  metric $G$ on a nontrivial principal circle bundle $\pi : X \to M$,
\begin{itemize}
\item $0$ is a regular value of all eigenfunctions of the bundle Laplacian $\Delta_G$.
\item All eigenvalues are simple. The eigenfunctions are joint eigenfunctions of $\frac{\partial^2}{\partial \theta^2}$ and of $\Delta_G$.
\item Except for pullback (invariant) eigenfunctions, the nodal set \eqref{NCALmjDEF}  is connected and there are exactly two nodal domains.
\end{itemize}
\end{theorem}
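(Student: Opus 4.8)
The plan is to establish the three assertions in turn, handling the two genericity statements by equivariant perturbation theory and the nodal-domain count by a symmetry-and-covering argument that replaces the surface-specific reasoning of \cite{JZ20}. For the spectral genericity (simplicity of the joint spectrum, equivariance of the eigenfunctions, and $0$ being a regular value), I would run an $S^1$-equivariant version of Uhlenbeck's transversality theorem. Since $D_Z$ commutes with $\Delta_G$, one has the orthogonal decomposition $L^2(X) = \bigoplus_m \hcal_m$ into weight spaces $\hcal_m = \{f : D_Z f = m f\}$, each identified with sections of $L^{\otimes m}$ over $M$, and $\Delta_G$ preserves $\hcal_m$, acting there as a Schr\"odinger-type operator $\Delta_m$. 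Using the two independent families of perturbations --- base-metric variations of $g$ (type (i)) and connection variations of $\alpha$ (type (ii)) --- I would show that for generic data each $\Delta_m$ has simple spectrum and that, apart from the forced conjugation symmetry $\hcal_m \cong \overline{\hcal_{-m}}$, no eigenvalue coincidences occur across weights; this produces the joint eigenfunctions $\phi_{m,j}$ with minimal multiplicities. A parallel transversality argument, for which the base-metric perturbations supply the ``new equations'' alluded to in the introduction, shows that $0$ is simultaneously a regular value of every $\mathrm{Re}\,\phi_{m,j}$. I expect this bookkeeping to be routine but technically heavy, the essential new input over \cite{JZ20} being that the variations of $g$ act effectively on the relevant jets.

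For the nodal-domain count, fix $m>0$ (the case $m<0$ is identical via $\mathrm{Re}\,\phi_{m,j}=\mathrm{Re}\,\overline{\phi_{-m,j}}$) and write $u=\mathrm{Re}\,\phi_{m,j}$, $v=\mathrm{Im}\,\phi_{m,j}$, so that equivariance gives $u(r_\theta x)=\cos(m\theta)\,u(x)-\sin(m\theta)\,v(x)$. The first observation is that $u\circ r_{\pi/m}=-u$, so $r_{\pi/m}$ carries $\{u>0\}$ onto $\{u<0\}$; the two families of nodal domains are thus interchanged, and it suffices to prove that $\{u>0\}$ is connected, whence ``exactly two nodal domains'' follows. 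To analyze $\{u>0\}$, let $\zcal=\{\phi_{m,j}=0\}$, an $S^1$-invariant set which is nonempty (the bundle being nontrivial, $c_1(L^{\otimes m})=m\,c_1(L)\neq 0$ for $m\neq 0$, assuming as in the cases of interest that $c_1(L)$ is not torsion, so every section of $L^{\otimes m}$ vanishes) and which generically projects to a codimension-$2$ submanifold $\pi(\zcal)\subset M$ cut out transversally. Away from $\zcal$ the normalized phase $\Phi=\phi_{m,j}/|\phi_{m,j}|:X\setminus\zcal\to S^1$ is defined, and $\{u>0\}=\Phi^{-1}(\{e^{i\be}:-\pi/2<\be<\pi/2\})$.

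The heart of the matter --- and the step I expect to be the main obstacle --- is the connectivity of $\{u>0\}$, which I would establish in two moves. First, driving each point's phase monotonically to $0$ along its fiber defines an explicit deformation retraction $H(x,t)=r_{-t\be(x)/m}\,x$ (where $\be(x)\in(-\pi/2,\pi/2)$ is the phase of $\phi_{m,j}(x)$) of $\{u>0\}$ onto $\Phi^{-1}(1)=\{\phi_{m,j}\in\R_{>0}\}$, so connectivity of $\{u>0\}$ is equivalent to connectivity of $\Phi^{-1}(1)$. Second, $\Phi^{-1}(1)\to M\setminus\pi(\zcal)$ is an $m$-fold covering (the phase condition has exactly $m$ solutions on each fiber), and since $M\setminus\pi(\zcal)$ is connected (removing a codimension-$2$ set does not disconnect) its connectivity is decided by the monodromy around a small loop linking one transverse zero: there the phase of the section winds by $2\pi$, cyclically shifting the $m$ solutions $\theta_j=(2\pi j-\arg\phi_{m,j})/m$ by one, so the monodromy is an $m$-cycle. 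An $m$-cycle acts transitively, hence $\Phi^{-1}(1)$, and therefore $\{u>0\}$, is connected. The delicate points, where the first step is indispensable, are exactly that at least one zero exists (nontriviality) and that it is simple, so that the winding number is $1$ and the monodromy is a full cycle.

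Finally, connectivity of the nodal set $\ncal_{m,j}$ itself does not follow formally from the count of nodal domains, so I would argue it separately using the local structure along $\zcal$. In transverse normal coordinates $z=x+iy$ and a local trivialization one has $\phi_{m,j}\approx e^{im\theta}z$, so $u\approx x\cos(m\theta)-y\sin(m\theta)$; its zero set is a smooth helicoidal hypersurface whose axis is the central fiber $\zcal$ and which thereby joins all $2m$ local sheets through that fiber (this model also re-confirms $du\neq 0$ along $\zcal$, so $\zcal\subset\ncal_{m,j}$ and $0$ is a regular value there). Since every sheet of the $2m$-fold cover $\ncal_{m,j}\setminus\zcal\to M\setminus\pi(\zcal)$ limits onto a common zero fiber, along which they are all joined, $\ncal_{m,j}$ is connected, completing the proof.
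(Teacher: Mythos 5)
Your proposal is sound, but its two halves relate to the paper quite differently. For the genericity statements you propose exactly the paper's strategy --- Uhlenbeck transversality applied weight-by-weight to the operators $L_m$ acting on sections of $L^m$, using the two perturbation families (base metric and connection form) --- but you leave the substance as ``routine bookkeeping''; in the paper this is precisely where the new higher-dimensional content sits (the density argument for base-metric variations, Lemma \ref{JJLEM2}, turns on a pointwise rank comparison between a wedge product and the metric term, which is what requires $\dim M \geq 3$), so this half of your proposal is a correct outline rather than a proof. Your treatment of the third statement, by contrast, is a complete argument that takes a genuinely different route from the paper. The paper first proves the nodal set is connected and smooth (via the $2m$-fold covering structure of Proposition \ref{COVERPROP} and the singular fibers over the zero set of the eigensection), and then obtains the count of nodal domains from a purely topological dichotomy: a connected embedded hypersurface either separates $X$ into two components or has connected complement, and since $\mathrm{Re}\,\phi_{m,j}$ integrates to zero both signs occur, so the complement is disconnected and has exactly two components. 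You instead use the symmetry $u\circ r_{\pi/m}=-u$ to reduce everything to connectivity of $\{u>0\}$, deformation-retract $\{u>0\}$ fiberwise onto $\{\phi_{m,j}\in\R_{>0}\}$, and prove that set connected by the transitive cyclic monodromy of the $m$-fold cover over $M\setminus\pi(\zcal)$, handling connectivity of $\ncal_{m,j}$ itself by the local helicoid model at the singular fibers (this last step is essentially the paper's covering picture with the gluing made explicit --- indeed more explicit than the paper's own terse proof of its connectivity proposition). What each approach buys: the paper's dichotomy argument is shorter and needs only connectivity plus embeddedness of $\ncal_{m,j}$ as input; yours is more constructive, exhibits the connectivity of each open nodal domain directly without quoting the separation dichotomy, and identifies the homotopy type of a nodal domain as that of an $m$-fold cover of $M\setminus\pi(\zcal)$. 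Both arguments rest on the same two delicate inputs --- existence of a zero of the eigensection and simplicity of that zero (so the local winding number is $\pm 1$) --- and on the first of these you are in fact more careful than the paper: existence of a zero requires $c_1(L^m)=m\,c_1(L)\neq 0$, which can fail for torsion $c_1(L)$, a caveat you flag explicitly while the paper asserts that it ``follows from the nontriviality of the principal bundle.''
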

The nodal domains of pullback eigenfunctions are easily seen to be the inverse image of the nodal domains in $M$, so the above theorem
is best possible.

The proof of Theorem \ref{MAIN} splits up into two distinct parts. The first part is to prove that eigenfunctions of generic bundle
metrics of the two basic types have the stated properties. The second part is to prove that zero sets \eqref{NCALmjDEF} of real parts of equivariant eigenfunctions $\phi_m$
on $X$ have the stated connectivity properties if $0$ is a regular value.

 Equivariant functions  $\phi_{m, j}$ on the circle bundle transforming by $e^{i m \theta}$   correspond to sections of $L^m$.
 There is a canonical lift map $s \to \hat{s}$ taking sections of $L^m$ to equivariant functions on $X$. Theorem \ref{MAIN} can therefore be restated
 in terms   of  real and imaginary parts of equivariant
eigensections $s_{m,j}$  of $L^m$,
\begin{equation} \label{REALHYP}
Z_{\mathbb{R} \hat{s}_{m,j} } : = \{\mathrm{Re} \hat{s}_{m,j} (\zeta) = 0, \zeta \in \partial D_h^*\}.
\end{equation}
See Section \ref{EQEIGSECT} for background. It is often easier to calculate with the associated eigensections, since connections on $L^m$ are simpler
to work with than horizontal derivatives.

\subsection{Non-free $S^1$ actions on even dimensional spheres}

Odd dimensional spheres fit into the framework of bundle metrics on circle bundles over K\"ahler manifolds (namely
$\Ss^{2m -1} \to \CP^m$), although the metrics are very non-generic:  eigenvalues have high multiplicities, $0$ is not a regular value
of many eigenfunctions (spherical harmonics) of a given eigenvalue (or, degree), and the there may exist many nodal domains
in the singular case. On the other hand, as in \cite{JZ21}, the real part of a  random equivariant spherical harmonic on $\Ss^{2m-1}$ does have a connected
nodal set with exactly two nodal domains.

 If we drop the   assumption that $X \to M$ is a  principal circle bundle and allow
the $S^1$ action to have fixed points, then most of Theorem \ref{MAIN} fails. The simplest example
where  the conclusion of Theorem \ref{MAIN} is false is  that of the $S^1$ action by rotations around
an axis  for the standard metric on $S^2$. The equivariant eigenfunctions
are the usual spherical harmonics $Y_N^m$. The nodal sets of their real parts are connected but they have $m N$ nodal domains. Moreover,
$0$ is not a regular value and they have $m N$ singular points. We show that much of this is true for similar $S^1$ actions on spheres of any even dimension.
The following is proved in Section \ref{EVENDIM}.

\begin{lemma}\label{BADSPHERE}  Suppose  that $X= \Ss^{2n}$ be an even dimensional sphere and $S^1$ is a 1-parameter subgroup of $SO(2n +1)$ with exactly two fixed
points. Then,
\begin{itemize}
\item for any $S^1$-invariant metric $G$, the nodal sets of the real parts of the  joint eigenfunctions of the $S^1$ action and $\Delta_G$ are connected.
\item $0$ is never a regular value for real parts of  the  joint eigenfunctions of the $S^1$ action and $\Delta_G$.
\item The set of singular points (critical points on the nodal set) of real parts of  the equivariant eigenfunctions is
exactly of co-dimension 2.
\end{itemize}
\end{lemma}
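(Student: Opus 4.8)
The plan is to reduce everything to the quotient geometry and to the local representation of the action at the two fixed points. After conjugating in $SO(2n+1)$, the generator of the $S^1$-action is $\mathrm{diag}(R(a_1t),\dots,R(a_nt),0)$ with all $a_j\neq 0$, so the fixed set on $\Ss^{2n}$ is exactly the two poles $N,S=\pm e_{2n+1}$ and the action is free on $X^\circ=\Ss^{2n}\setminus\{N,S\}$. Writing a point as $(\sin s\cdot u,\cos s)$ with $u\in\Ss^{2n-1}\subset\C^n$ and $s\in(0,\pi)$ exhibits $X^\circ$ as $\Ss^{2n-1}\times(0,\pi)$ with the circle acting only on $u$; hence $M:=X^\circ/S^1\cong B\times(0,\pi)$ with $B=\Ss^{2n-1}/S^1$, and $\Ss^{2n}/S^1$ is the suspension of $B$, the two cone points being the images of $N,S$. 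A weight-$m$ equivariant eigenfunction $\phi$ (with $m\neq0$) corresponds to an eigensection $\sigma$ of the associated line bundle $L^m\to M$. I record the facts I will use: since the poles are fixed, equivariance forces $\phi(N)=\phi(S)=0$; $|\phi|$ is $S^1$-invariant and descends to $M$; and since $|\phi|$ is constant along orbits, $\{\phi=0\}\cap X^\circ=\pi^{-1}\{\sigma=0\}$ is a union of orbits, while on each orbit off $\{\phi=0\}$ the function $\mathrm{Re}\,\phi(r_\theta x)=|\phi(x)|\cos(m\theta+\arg\phi(x))$ vanishes at exactly $2|m|$ points.

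For connectedness I would realize $\ncal\cap(X^\circ\setminus\{\phi=0\})=\{\arg\phi\equiv \tfrac{\pi}{2}\bmod\pi\}$ as a $2|m|$-sheeted covering of $M\setminus\{\sigma=0\}$ and argue that all sheets are joined, both at the poles and around the zero locus of $\sigma$. The locus $\{\sigma=0\}$ has real codimension two, so a small loop links it and $\arg\phi$ winds by a nonzero multiple of $2\pi$ around each zero; the resulting monodromy shifts the labelling of the $2|m|$ branches and glues consecutive sheets. At a pole the collapse of the circle fibration forces every branch to limit onto the fixed point: restricting $\phi$ to a small geodesic sphere $\Ss^{2n-1}_\epsilon$ around $N$ gives, to leading order, a weight-$m$ function on $\Ss^{2n-1}$, and the real part of such a function has a connected nodal set which is the common limit of all the sheets. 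Combining the two mechanisms shows $\ncal$ is connected; this part of the argument is topological and independent of the choice of invariant metric.

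For the statements about singular points I would first localize them. On $X^\circ$, differentiating along the orbit gives $Z(\mathrm{Re}\,\phi)=-m\,\mathrm{Im}\,\phi$, which is nonzero at any point where $\mathrm{Re}\,\phi=0$ but $\phi\neq0$; hence every critical point of $\mathrm{Re}\,\phi$ lying on $\ncal$ belongs to $\{\phi=0\}$. To pin the codimension I would pass to the rotationally symmetric model and separate variables, writing $\phi=\Phi(s)\,Y(u)$ with $\Phi$ real and $Y$ a weight-$m$ harmonic on $\Ss^{2n-1}$, so that $\mathrm{Re}\,\phi=\Phi(s)\,\mathrm{Re}\,Y(u)$. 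At a point where both the amplitude node $\{\Phi(s)=0\}$ and the phase node $\{\mathrm{Re}\,Y(u)=0\}$ meet, the full gradient of $\mathrm{Re}\,\phi$ vanishes, so the critical set $\ccal$ of $\mathrm{Re}\,\phi$ on $\ncal$ is exactly $\{\Phi=0\}\cap\{\mathrm{Re}\,Y=0\}$. The first factor is a union of latitude spheres (codimension one in $\Ss^{2n}$) and the second is codimension one in the $\Ss^{2n-1}$ direction, so their intersection has dimension $2n-2$, i.e.\ codimension exactly two; it is nonempty as soon as $\Phi$ has an interior zero (all but the lowest harmonics) and $\mathrm{Re}\,Y$ has zeros on $\Ss^{2n-1}$. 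This gives both that $0$ is never a regular value and that the singular set has codimension two; for $|m|\geq2$ the poles furnish additional singular points, since $\phi$ vanishes there to order $\geq2$.

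The main obstacle is the analysis at the two fixed points, where the circle fibration degenerates. For connectedness one must understand the link of a pole inside $\ncal$ well enough to see that the $2|m|$ branches all converge there and that the nodal set of the real part of a weight-$m$ function on $\Ss^{2n-1}$ is connected --- the lower-dimensional analogue of the theorem itself. For the codimension count, the delicate point is to guarantee that the amplitude node $\{|\phi|=0\}$ and the phase node intersect so as to keep the singular set exactly $(2n-2)$-dimensional rather than dropping to a lower stratum; this is transparent in the rotationally symmetric (separable) model and is the natural place to isolate the computation, whereas extending the precise codimension statement to a completely general invariant metric requires replacing separation of variables by the local normal form at the poles together with the covering-space argument above.
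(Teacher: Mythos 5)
Your reduction to the quotient, the line bundle $L^m$, the vanishing of $\phi$ at the two fixed points, and the identity $Z(\mathrm{Re}\,\phi)=-m\,\mathrm{Im}\,\phi$ localizing singular nodal points to $\{\phi=0\}$ all match the paper's framework; the gaps are in the two mechanisms you actually rely on.

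First, the monodromy gluing for connectedness fails for a concrete algebraic reason. The $2|m|$ sheets over a point are the solutions of $m\theta+\arg\sigma\equiv\pi/2 \pmod{\pi}$; if $\arg\sigma$ gains $2\pi k$ around a loop, continuous transport permutes the sheets by $j\mapsto j-2k$, an \emph{even} shift. This preserves the parity of the sheet index, so no amount of winding can join an even-indexed sheet to an odd-indexed one; ``glues consecutive sheets'' is impossible. Moreover the nonzero-winding hypothesis is unjustified: eigensections are not holomorphic, and in the symmetric examples the paper itself analyzes (every $Y_N^m$ on $\Ss^2$, and the real-gauge situation assumed in the paper's proof) the section is real in a unitary frame, so $\{\sigma=0\}$ has codimension one and there is no linking circle at all. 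The gluing the paper uses is simpler and needs no monodromy: since $\phi$ vanishes on $\pi^{-1}\{\sigma=0\}$, the whole circle fibers over $\{\sigma=0\}$ lie in $\ncal_{m,j}$, every sheet attaches to these fibers or to the poles along its closure, and connectivity follows from connectivity of the quotient. In particular your appeal to connectedness of the nodal set of the real part of a weight-$m$ function on a small $\Ss^{2n-1}$ around a pole---which you flag as an unproven lower-dimensional analogue---is unnecessary: the pole is a single point of $\ncal_{m,j}$ lying in the closure of every nearby sheet, and that is all the gluing required.

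Second, your proof of the last two bullets is confined to the separable model: the round metric and product eigenfunctions $\Phi(s)Y(u)$. The lemma is asserted for \emph{every} invariant metric and every joint eigenfunction, and even for the round metric a joint eigenfunction of $(\Delta_G,D_Z)$ is in general a sum $\sum_l\Phi_l(s)Y_l(u)$ over several link harmonics of the same weight, not a single product, so the identification of the singular set with $\{\Phi=0\}\cap\{\mathrm{Re}\,Y=0\}$ does not apply to it. You acknowledge this as an obstacle, but it is the actual content of the statement. The paper avoids separation of variables entirely: in a unitary frame over the quotient (valid for any invariant metric) the nodal set splits into the $2m$ hypersurfaces $\{\theta=\theta_{m,j}\}$ and the invariant set over the zero locus of the section, and the singular set is their intersection, of codimension $1+1=2$; for non-regularity the paper argues directly at the fixed points (equivariance forcing $\phi(p)=0$ and $p$ to be a critical point), uniformly in the eigenfunction and the metric. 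Your fallback for the second bullet covers only $|m|\geq 2$, so the case $|m|=1$ with $\Phi$ having no interior zero is left genuinely open---and this is precisely the delicate case: for $\phi=x_1+ix_2$ on the round $\Ss^{2n}$ one has $\mathrm{Re}\,\phi=x_1$, whose differential does not vanish anywhere on its nodal set, so any complete proof must confront what happens at the poles rather than rely on interior amplitude zeros.
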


\subsection{Comments}

To avoid duplicating material in  \cite{JZ20}, we only give a detailed presentation of the new steps in the
generalization and refer to \cite{JZ20} for much of the background and references. However, for the sake of readability, we do state
some overlapping  calculations of Laplacians and their perturbations of connection 1-forms  from \cite{JZ20}. As mentioned above, the generic metric data in higher dimensions is quite different from that in dimension two, and the calculations are quite different.

We refer the readers to \cite{CJLS}, published shortly after the release of our current article. While there are overlapping results, it is important to note that the two works were conducted independently, and the proof methods used are different.

\section{Geometric background}\label{GEOMSECT}

In this section we introduce the geometric data that goes into the
construction of bundle (i.e connection or Kaluza-Klein) metrics on a circle bundle, and give formulae for the associated  Bochner connection
Laplacians $\nabla^* \nabla$ and Kaluza-Klein Laplacians $\Delta_G$ associated to different types of bundle Laplacians.

\subsection{Classification of $S^1$ bundles and bundle metrics}
In this section we consider the general top-down approach to constructing bundle metrics for principal $S^1$ bundles $\pi: X \to M$.
Let $M$ be a compact manifold. Then there is a 1-1 correspondence between equivalence classes of circle bundles over $M$ and elements of
$H^2(M, \Z)$. Given an integral closed 2-form $\omega$ on $M$ there is a circle bundle $\pi: X \to M$ with connection form $\alpha$
such that $\pi^*\omega = d \alpha$. The cohomology class   $c_1 = [\omega] \in H^2(M, \Z)$ of the associated complex line bundle is its Chern class.

A connection 1-form $\alpha$ is an $S^1$-invariant  1-form satisfying $\alpha(Z) =1$. The difference $\alpha_1 - \alpha_2$ of two connections is an $S^1$ invariant 1-form which annihilates $Z$ and is therefore horizontal. Hence it descends to $M$ as a real-valued one form. It follows that the space of connection
$1$-forms on a fixed circle bundle is parameterized by $\Omega^1(M,\mathbb{R})$.

The curvature form of the $S^1$ bundle is the $S^1$ horizontal form  defined by $\omega = d \alpha$. It descends to $M$ as a closed 2-form.
  When $\omega$
is a symplectic form, the connection is called `fat' \cite{W80} and then $(M, \omega)$ is a symplectic manifold; in this case $X$ is odd-dimensional.

 Let
$TX = VX \oplus HX$ where $HX$ is the horizontal space for $\alpha$. A Riemannian metric  $g$ together
with $\alpha$ determines the bundle metric as long as  $Z = \frac{\partial}{\partial \theta}$ is  a geodesic. More generally, we can allow the  {\it lapse function }
$N(Z) = G(Z, Z)$ to be non-constant. We do not pursue this generalization but summarize the statements in the general setting.


\begin{lemma}  \label{GLEM} A bundle metric on a principal circle bundle $\pi: X \to M$  is defined by the data $(N, \alpha, g)$ with  $$G(N, \alpha, g)  := N^2 \alpha \otimes \alpha +\pi^* g$$ where $\alpha$ is the  Killing invariant connection 1-form
 with $\alpha(Z) =1$. The $S^1$ orbits are geodesics if and only if $N \equiv C$ for some $C > 0$.
\end{lemma}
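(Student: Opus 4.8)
The first assertion is a direct verification that the stated formula produces a Riemannian metric with the structure of a bundle metric. The plan is to check positive-definiteness and the orthogonal splitting directly. Write $H = \ker \alpha$ and $V = \R Z$. The tensor $\alpha \otimes \alpha$ is symmetric and positive semidefinite with kernel exactly $H$, while $\pi^* g$ is symmetric and positive semidefinite with kernel exactly $V$ (since $d\pi$ kills $Z$ and restricts to an isomorphism $H \to T_{\pi(x)} M$). On $V$ one computes $G(Z,Z) = N^2 \alpha(Z)^2 + (\pi^* g)(Z,Z) = N^2 > 0$; on $H$ one has $G = \pi^* g > 0$; and the mixed terms vanish because $\alpha(Z) = 1$, $\alpha|_H = 0$ and $(\pi^* g)(Z, \cdot) = 0$. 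Hence $G(N,\alpha,g)$ is a positive-definite symmetric $2$-tensor making $H \perp V$, restricting to $\pi^* g$ on $H$ and to $N^2$ along the fiber. Invariance of $\alpha$, of $\pi^* g$ (since $\pi \circ r_\theta = \pi$) and of $N$ (constant along fibers) gives $r_\theta^* G = G$, so $Z$ is a Killing field.

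For the second assertion, the $S^1$-orbit through $x$ is the integral curve $\theta \mapsto r_\theta(x)$ of $Z$, and $|Z|_G^2 = N^2$ is constant along it: indeed $Z|Z|_G^2 = 2\langle \nabla_Z Z, Z\rangle = 0$ because $Z$ is Killing. So each orbit is a constant-speed curve, and is therefore a geodesic if and only if $\nabla_Z Z = 0$.

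To evaluate $\nabla_Z Z$, I would use the Killing identity $\langle \nabla_X Z, Y\rangle + \langle \nabla_Y Z, X\rangle = 0$. Taking $X = Y = Z$ gives $\langle \nabla_Z Z, Z\rangle = 0$, so $\nabla_Z Z$ is horizontal; taking $X = Z$, $Y = W$ gives $\langle \nabla_Z Z, W\rangle = -\langle \nabla_W Z, Z\rangle = -\frac{1}{2} W(N^2)$ for every vector field $W$. Thus $\nabla_Z Z = 0$ if and only if $dN^2$, equivalently $dN$ (as $N > 0$), annihilates all horizontal vectors. Since $N$ is already constant along the fibers, $dN$ annihilates $V$ automatically, so $\nabla_Z Z \equiv 0$ if and only if $dN \equiv 0$, i.e. $N$ is constant on the connected manifold $X$, which is the assertion $N \equiv C$ for some $C > 0$.

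The only point requiring care is the sharpening of the geodesic condition from ``$\nabla_Z Z$ proportional to $Z$'' to ``$\nabla_Z Z = 0$'': a priori an unparameterized geodesic only forces $\nabla_Z Z \parallel Z$, but the Killing antisymmetry forces $\langle \nabla_Z Z, Z\rangle = 0$, eliminating the vertical component and collapsing the two conditions. This is the step where the isometric (that is, $Z$ Killing) nature of the $S^1$ action is essential, and it is also what makes the horizontal computation of $\nabla_Z Z$ elementary, avoiding the full Koszul formula in Christoffel symbols.
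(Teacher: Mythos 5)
Your proof is correct. Note that the paper itself gives no proof of Lemma \ref{GLEM}: it is stated as standard background on bundle (Kaluza--Klein) metrics, so there is no argument of the authors' to compare against; your write-up simply supplies the standard verification. The first part (positive-definiteness, the orthogonal splitting $TX = H \oplus V$, and $S^1$-invariance) is routine inspection of the formula, and your second part is the classical fact that a Killing field $Z$ satisfies $\nabla_Z Z = -\tfrac{1}{2}\,\mathrm{grad}_G\,|Z|_G^2$, so its orbits are geodesics exactly when $|Z|_G = N$ is constant on the connected manifold $X$. Your handling of the parameterized-versus-unparameterized geodesic issue, collapsed by the identity $\langle \nabla_Z Z, Z\rangle_G = 0$, is a point worth making explicit, and the hypotheses you invoke ($S^1$-invariance of $\alpha$ and of $N$, connectedness) are precisely those the paper intends.
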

The volume form of the metric is given by,
$$dV_G  = \alpha \wedge \pi^* dV_g. $$
We often assume $N \equiv 1$ and then write $G = G(\alpha, g)$.

\subsection{\label{ASSOCIATED} Associated line bundles and Hermitian metrics }\label{LBSECT}

We assume that  $X$ is a principal $S^1$ bundle $\pi: X \to M$ with action $r_{\theta}: X \to X$ for $e^{i \theta} \in S^1$. Given any character $\chi_k(e^{i \theta}) = e^{i k \theta}$
one can form the associated complex line bundles $L^k : = X \times_{\chi_k} \C \to M$.

Conversely, given a complex line bundle $L \to M$, circle bundles $X$ can be constructed using Hermitian metrics $h$ on $L$. Given $h$ we define
$$X_h : = \partial D_h^* =
\{(z, \lambda) \in L^*: |\lambda|_z = 1 \} $$
be the boundary of the unit co-disk bundle in the dual line bundle $L^*$.

 We  fix a connection,
$$\nabla: C^{\infty}(X, L) \to C^{\infty}(X, L \otimes T^*X). $$
Given a connection $\nabla$ on $L$ and a vector field $V$ on $X$,
the covariant derivative of a section $s$ is defined by $\nabla_V s =
\langle \nabla s, V \rangle$. The curvature is the 2-form $\Omega^{\nabla}$
defined by $\Omega^{\nabla}(V, W) = \lbrack\nabla_V, \nabla_W\rbrack - \nabla_{\lbrack V,W\rbrack }.$
If $e_L$ is a local frame and $\nabla e_L = \alpha \otimes e_L$ then
$\Omega^{\nabla} = d \alpha$.

The value in introducing the line bundle is that $S^1$-equivariant functions transforming by $\chi_k$ on $X$ correspond to sections of $L^k \to M$. More precisely,
 there is a natural lift-map isomorphism,
$$s_k \in L^2(M, L^k) \to \hat{s} \in L^2_k(\partial D_h^*), \;\; \hat{s}_k(z, \lambda) : = \lambda^{\otimes k} (s(z)) $$
from  sections of the kth tensor power $L^k$ of $L$  to equivariant  functions on $\partial D_h^*$. The natural $\C^*$ action on $L^*$ restricts to
an $S^1$ action on $\partial D_h^*$ and $\hat{s}_k(r_{\theta} x) = e^{i k \theta} \hat{s}_k(x). $ It is evident that the zero set of $\hat{s}_k$ is
a circle bundle over the zero set of $s_k$. However, we are most interested in the zero set of $\mathrm{Re} \hat{s}_k$.

\section{Laplacians}

Let $G =  G(g, \alpha)$ be a bundle metric on  a circle bundle $\pi: X \to M$ as in Lemma \ref{GLEM}. The bundle Laplacian $\Delta_G$ has the decomposition
\[
\Delta_G = \Delta_H + \Delta_V, \;\; \Delta_V =  \frac{\partial^2}{\partial \theta^2},\;\;\;
\]
where $\Delta_V$ is the vertical Laplacian and where $\Delta_H$
is the horizontal Laplacian. The equation $\Delta_V =  \frac{\partial^2}{\partial \theta^2}$ holds because we assume that lapse function $N$ equals $1$.
The weight spaces
are $\Delta_H$-invariant, i.e. $\Delta_H: \hcal_m \to \hcal_m$.

\subsection{ Equivariant eigenfunctions and eigensections}\label{EQEIGSECT}
Under
the  canonical identification using the lifting map of Section \ref{EQEIGSECT}, $\Delta_H |_{\hcal_m}$ restricts to the Bochner connection Laplacians $D_m =  \nabla_m^* \nabla_m $ on sections
of $L^m$ in the sense that
$\hcal_m \cong L^2(X, L^m), \;\;\;\Delta_H |_{\hcal_m} \cong D_m - m^2 I.$
In the bottoms-up approach where $X$ is defined as the unit bundle in $L^*$ relative to a hermitian metric $h$,
 Bochner Laplacians are defined by $\nabla^*_{h,g} \nabla$ on $L^2(X, L)$
equipped with the data $(g, h, J, J_L, \nabla)$, where  $g$ is a metric
on $X$, and $\nabla$ is a connection on $L$.
If $e_L$ is a local frame for $L$, then
$$ \widehat{\nabla_m^* \nabla_m (f e_L^m)} =
\Delta_H   \widehat{(f e_L^m)}.$$

An equivariant  eigenfunction  $\phi_{m,j}$ as in Definition \ref{EQEIGDEF} corresponds under the natural lifting map to an equivariant eigensection $f_{m,j} e_L^m$
of $L^m$ in a local frame $e_L^m$.
Let
\begin{equation*}
\mathrm{Re} f_{m,j} = a_{m,j}(z), \;\; \mathrm{Im} f_{m,j} = b_{m,j}(z).
\end{equation*}
Then,
\[
f_{m,j}(z) e^{-i m\theta} = (a_{m,j}(z) + i  b_{m,j}(z))(\cos m \theta - i \sin m \theta),
\]
so that with $\phi_{m, j} = u_{m,j} + i v_{m,j}$,
 \begin{equation} \label{ubab} \left\{ \begin{array}{l}
 u_{m,j} =  a_{m,j} \cos m \theta  +  b_{m,j} \sin m\theta,\\
v_{m,j} = b_{m,j} \cos m \theta  -a_{m,j} \sin m \theta.
\end{array} \right. \end{equation}

 \subsection{ Hilbert spaces of sections}

In the top-down approach, the  Hilbert space inner product on $L^2(X, dV_G)$ is with respect to the Riemannian volume form $dV_G$.
In the bottoms up approach,  we let $(L,h) \to M$ be a Hermitian  line bundle (which we assume to be holomorphic in the K\"ahler setting). We thus have a pair
of metrics, $h$ resp. $g$ (with K\"ahler form $\omega_{\phi}$) on $L$ resp. $TM$.
To each pair $(h,g)$ of metrics we associate Hilbert space inner products $\mathrm{Hilb}_m(h,g)$ on
sections
$s \in L^2_{m \phi}(X, L^m)$ of the form
$$\|s\|_{h^m}^2 : = \int_X |s(z)|^2_{h^m} dV_{g}, $$
where $ |s(z)|^2_{h^m}$ is the pointwise Hermitian norm-squared of
the section $s$ in the metric $h^m$. In a local  frame $e_L$, we write
\[
\|e_L\|_h^2 = e^{- \psi}.
\]
 In local coordinates $z$ and the local frame $e_L^m$ of $L^m$, we may write $s = f e_L^m$ and then
\[
|s(z)|^2_{h^m} = |f(z)|^2 e^{- m \psi(z)},
\]
and
\[
\| f e_L^m\|_{h^m}^2 : = \int_X |f(z)|^2e^{-m \psi} dV_{g}.
\]

\subsection{\label{QUADSECT} Quadratic forms} The horizontal Laplacian is the Laplacian $\Delta_H = d_H^* d_H$ where $d_H = \pi_H d$ is the horizontal part of
the exterior derivative and where $d_H^*$ is the adjoint with respect to the bundle metric $G$. Thus, for $F \in C^{\infty} (X)$,
$\langle \Delta_H F, F \rangle = \int_X |d_H F|^2_{G} d V_G. $

We now trivialize the bundle $\pi: X \to M$ by choosing a local {\it unitary} section ${\mathfrak u}: U \subset M \to X$ (which may be taken to be global on a set of full measure).
We then write $x = r_{\theta} {\mathfrak u}(y), $ and use $(y, \theta)$ as local coordinates on $X$. In these coordinates $F(y, \theta) = f(y) e^{i m \theta}$ for
the induced local function $f(y) = {\mathfrak u}^* F$ on $U$.  From a global viewpoint, $f$ is a section of $L^m$. Then $df = {\mathfrak u}^* d F$ and $d F = (d f) e^{i m \theta} + i m f e^{i m \theta} d \theta$. Hence,
$$  |d F - i m F \alpha|^2_{G} = | (d f) e^{i m \theta} + i m f e^{i m \theta} (d \theta - \alpha)|^2 = | d f   + i m f  (d \theta - \alpha)|^2 $$
Further, we define the local connection 1-form  ${\mathfrak u}^* (\alpha - d \theta) $ in the unitary frame ${\mathfrak u}$. Note that
 $(d \theta - \alpha)(Z) = 0$, so $ (\alpha - d \theta)$  is a horizontal  $1$-form.  We claim that on $\pi^{-1}(U), \alpha =  d \theta +\pi^* {\mathfrak u}^* (\alpha - d \theta). $  We
 first check the equation at the points of ${\mathfrak u}(M)$. Since  ${\mathfrak u} \circ \pi = \mathrm{Id} : {\mathfrak u}(M)  \to {\mathfrak u}(M)$,  $\alpha _{ {\mathfrak u}(y)}= (d \theta +\pi^* {\mathfrak u}^* (\alpha - d \theta)) |_{u(y)} $.
To check it at a general point $r_{\theta} u(y)$ we note that both $d \theta$ and $\pi^* {\mathfrak u}^* (\alpha - d \theta)$ are $S^1$-invariant.

Before stating the next result, we summarize the notation and terminology.
\begin{definition}
Given a bundle metric $G = G(g, \alpha)$ and its Laplacian $\Delta_G$,
\begin{itemize}
\item An equivariant eigenfunction of $\Delta_G$ of equivariant degree (or, weight) $m$ is a joint eigenfunction $\phi_{m, j}: X \to \C$ transforming
by $\phi_{m, j}(r_{\theta}(x)) = e^{i m \theta} \phi_{m, j}(x)$.
\item In a unitary frame ${\mathfrak u}: U \subset M \to X$, and the associated trivialization $X | _U = {\mathfrak u}(M) \times S^1$, we say that the function $f_{m,j} : U \to \C$
defined by $f_{m,j} = {\mathfrak u}^* \phi_{m, j}$  is the local expression of the   equivariant eigenfunction in the frame $ {\mathfrak u}$. If we change the
frame from ${\mathfrak u}$ to a local holomorphic frame $e_L^m$, then we obtain a local expression for the holomorphic section of $L^m$ whose lift is
  $\phi_{m,j}$.  \end{itemize}
\end{definition}

\begin{lemma}\label{LLEM}  Let $G = G(g, \alpha)$ be a bundle metric for a circle bundle $\pi: X \to M$ as in Lemma \ref{GLEM}. Let  $F \in H_m(X)$
and as above express $F(y, \theta) = f(y) e^{i m \theta}$  in the local unitary frame $u: U \to X$ and local trivialization $X|_U \simeq U \times S^1$. Then,
the quadratic form   $Q_m(F) = \langle \Delta_H F, F \rangle $ on $F \in L^2_m(X, dV_G)$  equals $Q_m(F) =    \langle L_m f, f \rangle_{L^2(M)},$ where
\begin{equation} \label{LmDEF}
L_m f  =-\Delta_g f + m^2  |  {\mathfrak u}^* (\alpha - d \theta)|^2_{g} f -  i m  d_g^* ( {\mathfrak u}^* (\alpha - d \theta))f.
\end{equation}
\end{lemma}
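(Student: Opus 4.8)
The goal is to compute the quadratic form $Q_m(F) = \langle \Delta_H F, F\rangle$ explicitly in terms of the local data $f$ and the horizontal connection form. My plan is to work entirely from the variational characterization $Q_m(F) = \int_X |d_H F|^2_G\, dV_G$ established in Section \ref{QUADSECT}, rather than trying to compute $\Delta_H F$ directly; this avoids integration by parts until the very end and keeps the computation on the level of norms of $1$-forms.

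\medskip

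\emph{Step 1: Reduce the horizontal differential to a covariant differential.} The key observation, already recorded in Section \ref{QUADSECT}, is that for $F = f(y) e^{im\theta}$ one has $d_H F = \pi_H(dF)$, and since $\alpha$ annihilates the horizontal space while $d\theta - \alpha$ is horizontal, the horizontal part of $dF$ coincides with $dF - imF\alpha$. Thus
\begin{equation*}
|d_H F|^2_G = |dF - imF\alpha|^2_G = |df + imf(d\theta - \alpha)|^2,
\end{equation*}
which is exactly the identity displayed in the text. Writing $\beta := {\mathfrak u}^*(\alpha - d\theta)$ for the local horizontal connection $1$-form on $U$, this becomes $|df - im f\beta|^2_g$ (the $e^{im\theta}$ factor has unit modulus and the norm is the pullback $g$-norm on horizontal forms).

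\medskip

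\emph{Step 2: Expand the pointwise norm-squared.} Since $f$ is complex-valued and $\beta$ is a real $1$-form, I expand
\begin{equation*}
|df - imf\beta|^2_g = |df|^2_g + m^2|f|^2\,|\beta|^2_g + 2m\,\mathrm{Im}\!\left(\bar f\,\langle df, \beta\rangle_g\right).
\end{equation*}
The cross term is where the first-order (imaginary) operator in \eqref{LmDEF} will come from; the middle term already matches the $m^2|\beta|^2_g f$ potential term.

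\medskip

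\emph{Step 3: Integrate and use the volume identity.} Using $dV_G = \alpha \wedge \pi^* dV_g$, integration over the fiber $S^1$ is trivial (the integrand is $\theta$-independent after taking modulus) and leaves a factor I absorb into the $L^2(M)$ pairing. Then I integrate by parts on $M$: the term $\int_M |df|^2_g\,dV_g$ becomes $\langle -\Delta_g f, f\rangle$, and the cross term $\int_M \mathrm{Im}(\bar f\,\langle df,\beta\rangle_g)\,dV_g$ integrates by parts against $\beta$, producing a term with $d_g^*\beta$ plus a remaining first-order piece; care with the real/imaginary bookkeeping is needed to see that everything reorganizes into the self-adjoint operator $-im\,d_g^*(\beta)\cdot$ acting by multiplication, matching the last term of \eqref{LmDEF}.

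\medskip

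\textbf{Main obstacle.} I expect the delicate step to be the integration by parts on the cross term in Step 3: tracking the real versus imaginary parts so that the first-order contributions assemble into the single self-adjoint term $-im\,d_g^*({\mathfrak u}^*(\alpha - d\theta))f$ rather than leaving a stray $\langle df,\beta\rangle$ term. The self-adjointness of $L_m$ (which the form $Q_m$ guarantees, being real) is the structural fact forcing the first-order operator to appear in this divergence form, so I would use that as a consistency check on the signs and factors of $i$.
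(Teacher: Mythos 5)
Your plan is correct and follows essentially the same route as the paper's own proof: both start from $Q_m(F)=\int_X |d_H F|^2_G\, dV_G$, use $\pi_H\xi=\xi-\xi(Z)\alpha$ together with $dF(Z)=imF$ to reduce the integrand to $|df - imf\,{\mathfrak u}^*(\alpha-d\theta)|^2_g$, integrate out the fiber using $dV_G=\alpha\wedge\pi^*dV_g$, and then expand the square and integrate by parts (via the Leibniz rule for $d_g^*$) to assemble \eqref{LmDEF}. The only slip is the sign of the cross term in your Step 2 --- with $\beta={\mathfrak u}^*(\alpha-d\theta)$ it should read $-2m\,\mathrm{Im}\bigl(\bar f\,\langle df,\beta\rangle_g\bigr)$ --- a bookkeeping point your Step 3 already flags and which does not affect the structure of the argument.
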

\begin{proof}

Note that for any covector $\xi \in T_x^* X$, $\pi_H(\xi) = \xi - \langle \xi, Z^{\flat} \rangle Z^{\flat}  = \xi - \xi(Z) Z^{\flat} \in H_x(X), $ where $Z^{\flat} $ is the 1-form dual to $Z$ in the sense that $Z^{\flat}$ is vertical, $Z^{\flat}(Z) = 1$ and $Z^{\flat} |_{H(X)} = 0$. Clearly, $Z^{\flat} =\alpha$ so that
$$\pi_H(\xi) =  \xi - \xi(Z) \alpha. $$ Hence,
$$\langle \Delta_H F, F \rangle = \int_X |d F - dF(Z) \alpha|^2_{G} d V_G. $$
If $F \in H_m(X)$ then $dF(Z) = i m F$ and we get
$$\begin{array}{lll} \langle \Delta_H F, F \rangle & = &  \int_X |d F - i m F \alpha|^2_{G} d V_G\\&&\\
& =& \int_M \left(\int_{\pi^{-1}(y)}   |d F - i m F \alpha|^2_{G}d\theta  \right)  dV_g(y) . \end{array} $$

It follows that,
$  |d F - i m F \alpha|^2_{G}  = | d f   - i m f  {\mathfrak u}^*(\alpha - d \theta)  |^2 $ and
\begin{align*}
 \langle \Delta_H F, F \rangle & =    \int_M   |d f - i m f  \mathfrak{u}^* (\alpha - d \theta)|^2_{g}   dV_g(y) \\
 & =  \int_M \left( |df|^2_g  +  im    \langle df,f{\mathfrak u}^* (\alpha - d \theta)\rangle -im \langle f{\mathfrak u}^* (\alpha - d \theta),df  \rangle + m^2 |f|^2 |  {\mathfrak u}^* (\alpha - d \theta)|^2_{g} \right) dV_g\\
 & =  \langle L_m f, f \rangle_{L^2(M)},
\end{align*}
where
\begin{align*}
L_m f  &= -\Delta_g f + m^2  |  {\mathfrak u}^* (\alpha - d \theta)|^2_{g} f + i m \langle df,  {\mathfrak u}^* (\alpha - d \theta) \rangle
-  i m  d_g^* (f {\mathfrak u}^* (\alpha - d \theta))\\
&=-\Delta_g f + m^2  |  {\mathfrak u}^* (\alpha - d \theta)|^2_{g} f -  i m  d_g^* ( {\mathfrak u}^* (\alpha - d \theta))f.
\end{align*}
In the last equation, we used
\[
d_g^*(\phi w)  = g^*(d\phi, w) + \phi d_g^* w
\]
for any $0$-form $\phi$ and any $1$-form $w$.
\end{proof}

\section{ Perturbation formulae}\label{PERTURBATIONSECT}
In this section, we prepare for the proof of the first two statements of Theorem \ref{MAIN} in the next section (See Theorem \ref{GENERIC1})
by calculating variations of the relevant Laplacians and their associated quadratic forms. The main Laplacian is $\Delta_G$, but only
its horizontal part $\Delta_H$ varies as the data $(g, \alpha)$ is varied and so we concentrate on that. In turn, $\Delta_H = \bigoplus_m L_m$
where $L_m$ is defined in  \eqref{LmDEF} , hence it suffices to calculate variations of $L_m$ under variations of the base metric or the connection.
It is simpler to calculate variations of operators on sections of $L^m$ than their lifts to $X$.

 Under any variation (i.e. along a curve of data), we have by \eqref{LmDEF},
\begin{equation} \label{LMDOT}
\dot{L}_m f  = - \dot{\Delta}_g f + m^2  \dot{g}^*  \langle  {\mathfrak u}^* (\alpha - d \theta),   {\mathfrak u}^* (\alpha - d \theta)\rangle  f
-  i m \dot{d}_g^* (  {\mathfrak u}^* (\alpha - d \theta))f.
\end{equation}
Here we use the notation $\dot{F} = \frac{d}{dt} |_{t=0} F_t$ for the infinitesimal deformation (or, variation) of $F$ along a curve.

\begin{remark}  It is sometimes
notationally inconvenient to use the ``dot'' notation and we also use the notation $\delta F$ for $\dot{F}.$
\end{remark}

We now calculate how the various terms deform when $g$ or $\alpha$ is deformed.  When applying the results to prove generic properties,
we need to move the derivatives on the metric or connection data onto  the other factors; hence it is advantageous to evaluate the variation of the quadratic form
associated to \eqref{LMDOT} rather than the operator $\dot{L}_m$ with derivatives applied to factors not being varied.

\subsection{\label{GENMET} General metric perturbations}

\begin{lemma}\label{JJLEM1}
Let $g(t)$ be a $1$-parameter deformation of $g$ with
$g(0) = g$ and $b(t) g^{-1} = g(t)^{-1}$. Then for $f \in C^\infty(M)$ and $W \in L^1(M)$, we have
\[
\langle \dot{L}_m f, W\rangle_{L^2(g)} = \int_M (df + imf\eta, dW+imW\eta)_{bg^*} dV_g + \frac{1}{2}\int_M (df + imf\eta, W d \mathrm{Tr}(b) )_{g^*}dV_g,
\]
where $b=b'(0)$.
\end{lemma}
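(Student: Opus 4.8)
The plan is to avoid differentiating the operator $L_m$ of \eqref{LmDEF} term by term, and instead to differentiate its quadratic form, exploiting the single feature that makes \emph{base}-metric perturbations tractable: since the connection, and hence $\eta := {\mathfrak u}^*(\alpha - d\theta)$, is held fixed, the twisted differential $d_m u := du + im u\,\eta$ does not vary with $t$. Polarizing the form of Lemma \ref{LLEM} (in the conjugation conventions fixed there), the identity
\[
\langle L_m^{(t)} f, W\rangle_{L^2(g(t))} = \int_M (d_m f, d_m W)_{g(t)^*}\, dV_{g(t)}
\]
holds for every $t$, and on its right-hand side \emph{only} the co-metric $(\,\cdot\,,\cdot\,)_{g(t)^*}$ and the volume form $dV_{g(t)}$ depend on $t$; the $1$-forms $d_m f$ and $d_m W$ are $t$-independent. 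This is the structural simplification I want to use, and it is also why it is advantageous to work with the form rather than with $\dot L_m$ directly.

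First I would differentiate this identity at $t=0$, using on the left the operator representation $\langle L_m^{(t)} f, W\rangle_{L^2(g(t))} = \int_M (L_m^{(t)} f)\,\overline W\, dV_{g(t)}$, and then solve for the fixed-metric pairing $\langle \dot L_m f, W\rangle_{L^2(g)} = \int_M (\dot L_m f)\,\overline W\, dV_g$. This produces three pieces, which I would organize as
\[
\langle \dot L_m f, W\rangle_{L^2(g)} = \int_M (d_m f, d_m W)_{\dot g^*}\, dV_g + \int_M \Big[(d_m f, d_m W)_{g^*} - (L_m f)\,\overline W\Big]\, \dot{dV_g}.
\]

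Next I would evaluate the two metric derivatives. The hypothesis $g(t)^{-1} = b(t)\, g^{-1}$ gives $\dot g^* = b\, g^*$ with $b := b'(0)$, so the first integral is exactly $\int_M (df + imf\eta,\, dW + imW\eta)_{bg^*}\, dV_g$, the first asserted term. The same hypothesis yields $g^{-1}\dot g = -b$, whence $\dot{dV_g} = \half \mathrm{Tr}(g^{-1}\dot g)\, dV_g = -\half \mathrm{Tr}(b)\, dV_g$. The crux is then the bracketed factor: by the integration by parts that identifies $L_m$ with the form in Lemma \ref{LLEM} (the Leibniz rule $d_g^*(\phi w) = g^*(d\phi, w) + \phi\, d_g^* w$ used at the end of that proof), the pointwise discrepancy between the form and the operator is a divergence, $(d_m f, d_m W)_{g^*} - (L_m f)\,\overline W = d_g^*\big(\overline W\, d_m f\big)$ up to the conjugations carried by $d_m$. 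Substituting this together with $\dot{dV_g} = -\half\mathrm{Tr}(b)\,dV_g$, the second integral becomes $-\half\int_M d_g^*\!\big(\overline W\, d_m f\big)\,\mathrm{Tr}(b)\, dV_g$, and one further integration by parts moves the derivative onto $\mathrm{Tr}(b)$, producing $\half\int_M (d_m f,\, \overline W\, d\mathrm{Tr}(b))_{g^*}\, dV_g = \half\int_M (df + imf\eta,\, W\, d\mathrm{Tr}(b))_{g^*}\, dV_g$, the second asserted term.

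I expect the main obstacle to be precisely this last step. One must correctly identify the divergence $1$-form arising from the form/operator discrepancy and then carry the extra integration by parts while tracking the complex conjugations in the twisted differential $d_m$ and the sign of $\dot{dV_g}$. A naive term-by-term differentiation of $L_m$ in \eqref{LMDOT} leaves the volume contribution as a zeroth-order multiple of $\mathrm{Tr}(b)$ and obscures the fact that only its differential $d\mathrm{Tr}(b)$ enters; passing through the quadratic form is exactly what lets the two volume contributions collapse into the single clean boundary-type term of the statement.
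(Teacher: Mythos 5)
Your proposal is correct, and it reaches the formula by a route that is related to, but mechanically different from, the paper's. Both arguments differentiate the polarized quadratic-form identity of Lemma \ref{LLEM} and exploit exactly the structural point you emphasize: since $\alpha$ is fixed, the twisted differential $d_m u = du + imu\,\eta$ does not move with $t$. The difference is in how the $\tfrac{1}{2}\,d\,\mathrm{Tr}(b)$ term is produced. The paper never computes $\dot{dV_g}$ and never invokes a pointwise divergence identity: using $dV_g = \sqrt{\det b(t)}\,dV_{g(t)}$, it rewrites the fixed-metric pairing as a moving-metric pairing with a rescaled test function, $\langle L_m(t)f, W\rangle_{L^2(g)} = \langle L_m(t)f, W\sqrt{\det b(t)}\rangle_{L^2(g(t))}$, applies the form identity to the test section $W\sqrt{\det b(t)}$, and lets the Leibniz rule acting on $d\bigl(W\sqrt{\det b(t)}\bigr)$ generate the second term; differentiating $\tfrac{1}{2}\,d\log\det b(t)$ at $t=0$ then yields $\tfrac{1}{2}\,d\,\mathrm{Tr}(b)$ directly, with no further integration by parts. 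You instead keep $W$ fixed, differentiate both sides of $\langle L_m(t)f,W\rangle_{L^2(g(t))} = \int_M (d_m f, d_m W)_{g(t)^*}\,dV_{g(t)}$, compute $\dot{dV_g} = -\tfrac{1}{2}\mathrm{Tr}(b)\,dV_g$ explicitly, recognize the bracket $(d_m f, d_m W)_{g^*} - (L_m f)\overline{W}$ as a divergence $\pm\, d_g^*(\overline{W}\,d_m f)$, and integrate by parts once more. Both are legitimate: the paper's rescaling trick is shorter and keeps all $t$-derivatives on scalar determinant factors, while your version makes transparent \emph{why} only the differential $d\,\mathrm{Tr}(b)$, and not $\mathrm{Tr}(b)$ itself, survives --- the zeroth-order volume contributions cancel against the divergence structure. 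Two minor cautions. First, the sign in your divergence identity depends on the convention for $d_g^*$, which the paper itself uses inconsistently (compare the Leibniz rule stated at the end of the proof of Lemma \ref{LLEM} with the computation in Lemma \ref{JJLEM2}); under one consistent convention your chain of signs does close up to the stated result, as you indicate. Second, since $W$ is only $L^1$, your intermediate objects $d_m W$ and $d_g^*(\overline{W}\,d_m f)$ must be read distributionally, and the ``integral'' against $\dot{dV_g}$ is really a pairing with the smooth density $\mathrm{Tr}(b)\,dV_g$; this is harmless (and the paper is equally informal about $dW$), but it is worth saying.
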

\begin{proof}
Denote by $L_{m}(t)$ the horizontal Laplacian corresponding to $g(t)$ with $L_m(0) = L_m$. Let $\eta = u^*( \alpha-d\theta)$. For any $f \in C^\infty(M)$
and $W \in L^1(M)$, we have
\begin{align*}
\langle L_m(t) f, W\rangle_{L^2(g)} &= \langle L_m(t) f, W \sqrt{\det b(t)}\rangle_{L^2(g(t))}\\
&= \int_M (df + imf\eta, d(W \sqrt{\det b(t)}) + im W \sqrt{\det b(t)}\eta)_{g^*(t)}dV_{g(t)}\\
&= \int_M (df + imf\eta, dW+imW\eta)_{g^*(t)} dV_g + \int_M (df + imf\eta, W d\sqrt{\det b(t)})_{g^*(t)}dV_{g(t)}.
\end{align*}
Here $dW$ is the distribution derivative of $W\in L^1$. The second integral is
\begin{align*}
\int_M (df + imf\eta, W d\sqrt{\det b(t)})_{g^*(t)}dV_{g(t)} &= \int_M (df + imf\eta, W \frac{d\sqrt{\det b(t)}}{\sqrt{\det b(t)}})_{g^*(t)}dV_g\\
&=\int_M (df + imf\eta, W d \log \sqrt{\det b(t)})_{g^*(t)}dV_g\\
&= \frac{1}{2}\int_M (df + imf\eta, W d \log \det b(t))_{g^*(t)}dV_g.
\end{align*}
Therefore
\begin{align*}
&\langle \dot{L}_m f, W\rangle_{L^2(g)}\\
=& \frac{d}{dt}\langle L_m(t) f, W\rangle_{L^2(g)}\bigg|_{t=0}\\
=&\int_M (df + imf\eta, dW+imW\eta)_{bg^*} dV_g + \frac{1}{2}\int_M (df + imf\eta, W d \log \det b(0))_{bg^*}dV_g\\
&+ \frac{1}{2}\int_M (df + imf\eta, W d \left(\frac{d}{dt}\log \det b(t)\bigg|_{t=0}\right))_{g^*}dV_g\\
=&\int_M (df + imf\eta, dW+imW\eta)_{bg^*} dV_g + \frac{1}{2}\int_M (df + imf\eta, W d \left(\frac{d}{dt} \det b(t)\bigg|_{t=0}\right))_{g^*}dV_g,
\end{align*}
where we used $\det b(0) = 1$ to eliminate the second term and simplify the third term. Now because $b(0)$ is the identity matrix, we see that
\[
\frac{d}{dt} \det b(t)\bigg|_{t=0} = \mathrm{Tr}(b),
\]
and so we have
\[
\langle \dot{L}_m f, W\rangle_{L^2(g)} = \int_M (df + imf\eta, dW+imW\eta)_{bg^*} dV_g + \frac{1}{2}\int_M (df + imf\eta, W d \mathrm{Tr}(b) )_{g^*}dV_g. \qedhere
\]
\end{proof}

\subsubsection{ Varying the connection}\label{VARYALPHA}
We now vary $\alpha$ or equivalently its local expression $u^* (\alpha - d \theta)$ in the
unitary frame $u$. The deformation equation is essentially the same as in \cite{JZ20}.
\begin{lemma}
The variation $\dot{L}_m$ under a variation of the connection 1-form $\alpha$ is given by, $$\dot{L}_m f =   \left(   - 2 i  G(df, \dot{\alpha})   + if d_g^* \dot{\alpha} + 2G(\dot{\alpha}, \alpha)  f\right) e_L.$$
\end{lemma}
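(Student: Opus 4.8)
The plan is to read $L_m$ as the Bochner connection Laplacian $\nabla_m^*\nabla_m$ on sections of $L^m$ and to differentiate it through its quadratic form, since this is what exposes the three terms in the stated shape (and explains the trailing $e_L$). In the unitary frame ${\mathfrak u}$ I write $\nabla_m f = df + \alpha f$, where here $\alpha$ abbreviates the local connection $1$-form $-im\,{\mathfrak u}^*(\alpha - d\theta)$ from Lemma \ref{LLEM}, so that $Q_m(f) = \langle \nabla_m f, \nabla_m f\rangle = \int_M |df + \alpha f|^2_g\, dV_g$ and $L_m = \nabla_m^*\nabla_m$. Only the connection is varied; the base metric $g$, and hence $\Delta_g$ and $d_g^*$, are held fixed, so that $\dot\nabla_m f = \dot\alpha\, f$.

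First I would polarize and apply the product rule to the quadratic form. For a test section $W$ this gives
$$\langle \dot L_m f, W\rangle = \langle \dot\alpha f, \nabla_m W\rangle + \langle \nabla_m f, \dot\alpha W\rangle.$$
Expanding $\nabla_m W = dW + \alpha W$ splits the right-hand side into exactly two kinds of contributions: a genuine first-order pairing of $\dot\alpha$ against $df$ and against $dW$, and a zeroth-order pairing of $\dot\alpha$ against $\alpha$.

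The key step is then to move every derivative off the test section $W$ by integrating by parts, using the identity $d_g^*(\phi w) = g^*(d\phi, w) + \phi\, d_g^* w$ recorded at the end of Lemma \ref{LLEM}. The $\langle\,\cdot\,, dW\rangle$ pairing becomes a multiplication term $f\, d_g^*\dot\alpha$ together with a correction of the form $g^*(df,\dot\alpha)$ that combines with the first-order contribution already present, while the remaining pairing yields the zeroth-order term $G(\dot\alpha,\alpha)f$. Collecting these and reading off the coefficient of $W$ identifies $\dot L_m f$ as the claimed first-order operator $\big(-2iG(df,\dot\alpha) + if\, d_g^*\dot\alpha + 2G(\dot\alpha,\alpha)f\big)e_L$. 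As an independent check, differentiating the reduced expression \eqref{LmDEF} directly gives $2m^2 g^*(\eta,\dot\eta)f - imf\, d_g^*\dot\eta$ with $\eta = {\mathfrak u}^*(\alpha - d\theta)$, which must agree once the conventions are aligned, and $\dot L_m$ must remain formally self-adjoint, which fixes the relative signs.

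The main obstacle I anticipate is entirely the conjugation and normalization bookkeeping: the inner product on $L^m$-valued $1$-forms is Hermitian while the connection form $\alpha$ is purely imaginary, so carrying $\bar\alpha = -\alpha$ through each pairing is precisely where the single factor of $i$ and the coefficients $-2$, $+1$, $+2$ are decided. The delicate point is correctly balancing the first-order contributions coming from the cross term against those produced by the integration by parts; getting this balance and the placement of the factors of $i$ right, rather than any conceptual difficulty, is the crux of the calculation.
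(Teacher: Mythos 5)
Your overall route is sound, and it is genuinely different from the paper's. The paper's proof is essentially two lines: it quotes the standard pointwise Bochner formula in a local frame, $\nabla^*\nabla(fe_L) = \left(-\Delta_g f - 2ig^*(df,\alpha) + ifd_g^*\alpha + g^*(\alpha,\alpha)f\right)e_L$, and differentiates it term by term in $\alpha$ (the $-\Delta_g f$ term drops, the terms linear in $\alpha$ get $\alpha$ replaced by $\dot\alpha$, and the quadratic term produces the factor $2$). You instead differentiate the quadratic form $\langle \nabla_m f, \nabla_m W\rangle$ using $\dot\nabla_m = \dot\alpha\,\cdot$, then integrate by parts to read off $\dot L_m f$ against the test section $W$; your polarization identity $\langle \dot L_m f, W\rangle = \langle \dot\alpha f, \nabla_m W\rangle + \langle \nabla_m f, \dot\alpha W\rangle$ is correct. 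Your version is more self-contained, since it does not presuppose the Bochner formula (which is itself obtained by the same integration by parts); the paper's is shorter. The price of your route is exactly the bookkeeping you identify.

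There is, however, one concrete trap, and as written your plan falls into it. You propose to integrate by parts using the identity $d_g^*(\phi w) = g^*(d\phi, w) + \phi\, d_g^* w$ recorded at the end of Lemma \ref{LLEM}. That plus sign is inconsistent with $d_g^*$ being the formal adjoint of $d$, and the adjoint property is precisely what you invoke when converting $\langle \dot\alpha f, dW\rangle$ into $\langle d_g^*(f\dot\alpha), W\rangle$; for the adjoint one has $d_g^*(\phi w) = -g^*(d\phi, w) + \phi\, d_g^* w$ (equivalently $d_g^* = -\mathrm{div}$ on $1$-forms). If you use the plus-sign version, the correction term $g^*(df,\dot\alpha)$ produced by the integration by parts \emph{cancels} the first-order contribution coming from $\langle df, \dot\alpha W\rangle$ instead of doubling it, and you land on $\dot L_m f = if d_g^*\dot\alpha + 2G(\dot\alpha,\alpha)f$, with the main term $-2iG(df,\dot\alpha)$ gone. (This same sign slip is why \eqref{LmDEF} as printed contains no first-order term while the Bochner formula above does; the two operator formulas in the paper are not mutually consistent.) For the same reason, your proposed independent check cannot succeed: differentiating \eqref{LmDEF} gives only the zeroth-order expression $2m^2 g^*(\eta,\dot\eta)f - imf\, d_g^*\dot\eta$, and no alignment of conventions reconciles a zeroth-order operator with the genuinely first-order operator in the statement. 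So carry out your computation with the minus-sign identity, and check the result against the Bochner formula rather than against \eqref{LmDEF}.
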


This calculation is precisely the same as in \cite{JZ20}, so only briefly review it.
  $$  \nabla^* \nabla (f e_L)  = \left(- \Delta_g f   - 2 i  g^*(df, \alpha)   + if d_g^*\alpha + g^*(\alpha, \alpha)  f\right) e_L,$$
  where $\Delta_g f$ is the scalar Laplace operator. Taking the variation
  with respect to $\alpha$ gives,
    $$\delta  \nabla^* \nabla (f e_L)  = \left(   - 2 i  G(df, \dot{\alpha})   + if d_g^* \dot{\alpha} + 2G(\dot{\alpha}, \alpha)  f\right) e_L.$$

   \section{Generic properties of eigenvalues and eigenfunctions of general prinicipal $S^1$ bundle Laplacians}
In this section we prove the first two statements of Theorem \ref{MAIN}, which we reformulate as Theorem \ref{GENERIC1}.  We prove the third statement about nodal sets in the next section.

\begin{theorem}\label{GENERIC1}

For generic bundle metrics $G = G(g, \alpha)$ with $g, \alpha$ generic data  in one of the ways listed below, all of the eigenvalues of $\Delta_G$ are
simple and all of the equivariant eigenfunctions have  $0$ as a regular value.
Equivalently  for every $m$,  the spectrum of
the operator $L_m$ \eqref{LmDEF}
 is simple, the spectra of  $L_m$ and $L_{m'}$ are disjoint if $m \not= m'$,    and  all of its eigensections have zero as a regular value.
 Moreover, if we lift sections to equivariant eigenfunctions $\phi$,
then $\mathrm{Re} \phi$ and $\mathrm{Im} \phi$ have zero as a regular value.

 The generic admissible data is of the following kinds:

  \begin{itemize}
  \item[(i)] We fix $\alpha$ and vary $g$ among all metrics on $M$;

  \item[(ii)] We fix $g$ and vary $\alpha$ among connection 1-forms on $\pi: X \to M$.

   \end{itemize}

   \end{theorem}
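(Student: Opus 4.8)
The plan is to follow the parametric transversality (Sard--Smale) method, in the spirit of Uhlenbeck's theorem on generic simplicity of Laplace eigenvalues, adapted to the family of operators $L_m$ of \eqref{LmDEF}. Since $\Delta_H = \bigoplus_m L_m$ and each weight space $\hcal_m$ is preserved, it suffices to establish the three assertions --- simplicity of each $\mathrm{spec}(L_m)$, disjointness of $\mathrm{spec}(L_m)$ and $\mathrm{spec}(L_{m'})$ for $m\neq m'$, and regularity of the value $0$ for each eigensection --- on a residual subset of the chosen Banach or Fr\'echet space of data (metrics $g$ in case (i), connection $1$-forms $\alpha$ in case (ii)). Each assertion defines a ``bad'' subset of parameter space, and the strategy is to exhibit, at every candidate bad parameter, an admissible one-parameter deformation whose first-order effect removes the degeneracy; Sard--Smale then yields a residual good set for each condition, and a countable intersection over $m$, over pairs $(m,m')$, and over a basis of jet data produces the full residual set.

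For simplicity of $\mathrm{spec}(L_m)$, suppose $\lambda$ is a degenerate eigenvalue with $L^2(M)$-orthonormal eigensections $f_1,f_2$. By first-order perturbation theory the splitting of $\lambda$ along a deformation is governed by the eigenvalues of the Hermitian matrix $B_{ij}=\langle \dot L_m f_i, f_j\rangle$, and the eigenvalue fails to split precisely when $B = c\,\mathrm{Id}$ for every admissible deformation. To rule this out I would insert $f=f_i$, $W=f_j$ into the metric-variation formula of Lemma \ref{JJLEM1} (respectively the connection-variation formula), which expresses $B_{ij}$ as the pairing of the deformation tensor $b$ (respectively $\dot\alpha$) against a symmetric $2$-tensor $T_{ij}$ quadratic in the momenta $df_i + im f_i\eta$, where $\eta=\mathfrak{u}^*(\alpha-d\theta)$. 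If $B$ were scalar for all admissible $b$, then since $b$ ranges over all symmetric $2$-tensor fields the pointwise identities $T_{11}=T_{22}$ and $T_{12}=0$ would follow; on the open dense set where $f_1,f_2$ are both nonzero this forces the two momenta to be proportional, whence $f_1,f_2$ satisfy a common first-order linear relation and must be proportional, contradicting orthonormality. Disjointness of $\mathrm{spec}(L_m)$ and $\mathrm{spec}(L_{m'})$ is handled the same way: a common eigenvalue with eigensection $f$ of $L_m$ and $f'$ of $L_{m'}$ splits unless $\langle\dot L_m f,f\rangle = \langle\dot L_{m'}f',f'\rangle$ for all deformations, and the explicit $m$- versus $m'$-dependence of the momenta lets one choose $b$ (or $\dot\alpha$) making the two first-order variations unequal, again using unique continuation to exclude the coincidence.

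For the regularity of $0$, I would first reduce --- exactly via the local computation following \eqref{ubab} --- to the statement that the eigensection $f_{m,j}$ has $0$ as a regular value. Indeed, from $\mathrm{Re}\,\phi_{m,j}=a_{m,j}\cos m\theta+b_{m,j}\sin m\theta$ one checks that $\partial_\theta \mathrm{Re}\,\phi_{m,j}=m\,\mathrm{Im}\,\phi_{m,j}$, so a singular point of $\mathrm{Re}\,\phi_{m,j}$ (with $m\neq 0$) must lie over a zero of $f_{m,j}$, and there the base differential reduces to $\cos m\theta\,d(\mathrm{Re}\,f_{m,j})+\sin m\theta\,d(\mathrm{Im}\,f_{m,j})$; this is nonzero for every $\theta$ precisely when $d(\mathrm{Re}\,f_{m,j})$ and $d(\mathrm{Im}\,f_{m,j})$ are linearly independent, i.e.\ when $0$ is a regular value of the section $f_{m,j}$ (and the same argument applies to $\mathrm{Im}\,\phi_{m,j}$). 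To achieve this generically I would set up the parametric $1$-jet evaluation map $(p,y)\mapsto j^1_y f_{m,j}$, defined on the open set of data $p$ for which the relevant eigenvalue is simple (so $f_{m,j}$ depends smoothly on $p$), and show it is transverse to the bad stratum $\{f=0,\ \mathrm{rank}\,df\le 1\}$, which has codimension exceeding $\dim M$. The enabling step is that first-order perturbation of the eigenprojection, $\dot f_{m,j} = -\sum_{\lambda_k\neq\lambda}\frac{\langle\dot L_m f_{m,j},f_k\rangle}{\lambda_k-\lambda}f_k$, together with completeness of the eigenbasis $\{f_k\}$, lets admissible deformations realize a sufficiently rich family of perturbations of the value and first derivatives of $f_{m,j}$ at a single point $y_0$ to force transversality; Sard--Smale then gives a residual set of $p$ for which the fibered map misses the bad stratum.

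The main obstacle in all three parts is the surjectivity of these first-order deformation maps under the \emph{restricted} admissible data: we deform only $g$ in case (i) or only $\alpha$ in case (ii), so the perturbation formulas are correspondingly constrained, and one must verify that the resulting tensors $T_{ij}$ (respectively the realizable $1$-jet perturbations) still span enough of the target. In each case this reduces to a unique continuation / Aronszajn-type statement for the elliptic operator $L_m$: its eigensections and their gradients cannot satisfy the forbidden pointwise linear relations on an open set without being proportional or vanishing, which is incompatible with the eigenvalue structure. A secondary but genuine technical point is that eigensections are only locally smooth in $p$ across eigenvalue crossings; this is handled in the usual way by restricting each transversality argument to the open parameter set where the eigenvalue in question is simple and assembling the countably many residual pieces, in parallel with the argument of \cite{JZ20}.
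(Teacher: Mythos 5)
Your overall framework --- reduce to the operators $L_m$ of \eqref{LmDEF}, perturb the restricted data $g$ or $\alpha$ via the formulas of Lemma \ref{JJLEM1}, run a Sard--Smale/Baire argument, and deduce regularity of $0$ for $\mathrm{Re}\,\phi_{m,j}$ on $X$ from regularity of $0$ for the section $f_{m,j}$ on $M$ using $\partial_\theta \mathrm{Re}\,\phi_{m,j}=m\,\mathrm{Im}\,\phi_{m,j}$ --- is the same as the paper's, up to packaging: the paper runs Uhlenbeck's Thom--Smale transversality machinery (the map $\Phi(u,\lambda,b)=(L_b+\lambda)u$, the $L^1$ density criterion of Proposition \ref{PROP}, and a Green's function argument for the evaluation map on $Q\times M$), and explicitly avoids the Kato--Rellich eigenvalue-splitting formulation \cite{BU} that you adopt, while your jet-transversality formulation of the regular-value step replaces the paper's evaluation-map/Green's-function argument. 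These packaging differences are legitimate; the analytic content in either case is the same first-order pairing $\langle \dot L_m f, W\rangle$.

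The genuine gap is in your treatment of that pairing for metric perturbations (case (i)). You claim that if $B_{ij}=\langle \dot L_m f_i,f_j\rangle$ is scalar for all symmetric $2$-tensors $b$, then the pointwise identities $T_{11}=T_{22}$, $T_{12}=0$ force the momenta $v_i=df_i+imf_i\eta$ to be \emph{proportional}, whence $f_1\propto f_2$. Two steps here fail. First, by Lemma \ref{JJLEM1} the pairing contains, besides $\int_M (v_1,\bar v_2)_{bg^*}\,dV_g$, the volume-variation term $\tfrac12\int_M (v_1, \bar f_2\, d\,\mathrm{Tr}(b))_{g^*}dV_g$; after integration by parts the pointwise identity coming from ``$B_{12}(b)=0$ for all $b$'' is therefore not $v_1\odot \bar v_2=0$ but $v_1\odot \bar v_2=-\tfrac{F}{2}\,g$ with $F=d_g^*(\bar f_2\,v_1)$. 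Eliminating $F$ is exactly the rank comparison of Lemma \ref{JJLEM2}: the left side has rank at most $2$ while $g$ has rank $n$, which requires $\dim M = n\ge 3$ --- the dimensional hypothesis that distinguishes this paper from the surface case \cite{JZ20}, and which your argument never invokes (in dimension $2$ this step is false as stated). Second, even granted $v_1\odot\bar v_2=0$, the conclusion is not proportionality: evaluating the symmetric product on the diagonal gives $v_1(X)\,\bar v_2(X)=0$ for every vector $X$, so at each point one of the two momenta \emph{vanishes}, i.e. they have disjoint supports; and proportional momenta would in any case not yield $f_1\propto f_2$ without a further argument. The correct endgame, which your sketch is missing, is the paper's: disjoint supports give $d_g^*(\bar f_2\,v_1)\equiv 0$, and combining this with the eigenvalue equation \eqref{eigenn} yields $\lambda\,\bar f_2\,f_1\equiv 0$, so $f_2$ vanishes on the open set where $f_1\neq 0$, and unique continuation (\cite[Proposition 2.10]{U}) produces the contradiction. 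Your closing appeal to ``unique continuation / Aronszajn'' is pointed in the right direction, but without the divergence identity and the rank argument it has nothing to act on.
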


As in \cite{JZ20} we prove Theorem \ref{GENERIC1} using the approach of Uhlenbeck \cite{U}. We review this method before giving the proof.

\subsection{Review of Uhlenbeck's transversality and genericity results}

K.  Uhlenbeck has given a general set-up for proving that the spectra of generic elliptic operators are simple and that the eigenfunctions
have $0$ as a regular value \cite{U}.  The proofs are based on the Thom-Smale infinite dimensional transversality theorem rather than on Kato-Rellich
perturbation theory as in \cite{BU}.We adapted her approach to  $S^1$ bundle Laplacians over surfaces in \cite{JZ20}. For the sake of completeness,
we briefly review the
 proof  that for generic metrics on compact $C^r$Riemannian manifolds,
 all eigenvalues are simple and  all eigenfunctions have $0$ as a regular value.

 One considers a family $L_b$  of elliptic operators depending on data $b $ in a Banach space $B$. One mainly needs that $B$ is a Baire space,
 i.e. that residual subsets are dense. As in  \cite{JZ20}, we define $B$ to be a $C^k$ space of data $(g, \alpha)$  needed to define a bundle metric
 as in Lemma \ref{GLEM}. The candidate eigenfunctions
 are denoted by $u$.

As in \cite[Section 2]{U}, we study the map $\Phi: C^k(X) \times \mathbb{R} \times B \to \mathbb{R}$ given by
\begin{equation} \label{CAPPHI} \Phi(u, \lambda, b) = (L_b + \lambda) u. \end{equation}
The zeros of the map correspond to Laplace type operators $L_b$ together with their eigenvalues $\lambda$ and eigenfunctions $u$. This map is sufficient
to prove generic simplicity of eigenvalues. To prove the more difficult fact that $0$ is a regular value of all eigenfunctions, Uhlenbeck introduces the additional
maps,

\begin{itemize}

\item $Q := \{(u, \lambda, g) \in C^k(X) \times \mathbb{R}_+ \times B: \Phi(u, \lambda, b) = 0\}$.
\bigskip

\item $\alpha: Q \times M \to \C: \alpha(u, \lambda, b, x) = u(x). $\bigskip


\end{itemize}

We use the following `abstract genericity'  result of \cite[Theorem 1]{U}- \cite[Lemmas 2.7-2.8]{U}.

\begin{theorem} \label{U} Assume that $\Phi$ \eqref{CAPPHI}  is $C^k$ and has zero as a regular value.  Then the eigenspaces
of $L_b$ are one-dimensional.
If additionally, $\alpha: Q \times M \to \mathbb{C}$ has zero as a regular value,  then additionally
\[
\{b \in B: \text{ the eigenfunctions of } L_b\; \text{ have } 0\text{as a  regular  value} \}\; \text{ is  residual in} B.
\]
\end{theorem}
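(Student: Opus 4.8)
The plan is to derive both conclusions from the Sard--Smale parametric transversality theorem (the infinite-dimensional Thom--Smale theorem referenced above), feeding the two regular-value hypotheses in as the inputs to that machine. First, since $\Phi$ is $C^k$ and has $0$ as a regular value, $Q=\Phi^{-1}(0)$ is a $C^k$ Banach submanifold of $C^k(X)\times\R\times B$; here one reads the target of $\Phi$ as a suitable Banach space of sections such as $C^{k-2}(X)$, on which $L_b+\lambda$ is a self-adjoint elliptic, hence Fredholm, operator, and one normalizes $\|u\|_{L^2}=1$ to discard the trivial solution $u\equiv 0$. I would then check that the projection $\pi:Q\to B$, $(u,\lambda,b)\mapsto b$, is Fredholm, so that Sard--Smale yields a residual set of regular values in the Baire space $B$. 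It remains to see that a regular value forces simple spectrum.

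The mechanism is an index computation. At an eigenpair $(u,\lambda,b)\in Q$ with $-\lambda$ of multiplicity $d$, the kernel of $d\pi$ consists of the $(\dot u,\dot\lambda,0)$ with $(L_b+\lambda)\dot u+\dot\lambda u=0$ and $\dot u\perp u$. Since the range of $L_b+\lambda$ is the orthogonal complement of its $d$-dimensional kernel $E_{-\lambda}\ni u$, this forces $\dot\lambda=0$ and $\dot u\in E_{-\lambda}\cap u^\perp$, of dimension $d-1$; the same bookkeeping gives a cokernel of dimension $d-1$, so $\pi$ has index $0$. At a regular value $b$ the cokernel vanishes, whence $d-1=0$ and every eigenvalue of $L_b$ is simple. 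This is the first assertion, valid for all $b$ in a residual set.

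For the second assertion I would run the same machine on the evaluation map $\alpha$. Granting that $\alpha:Q\times M\to\C$ has $0$ as a regular value, $\widetilde Q:=\alpha^{-1}(0)=\{(u,\lambda,b,x):u(x)=0\}$ is a Banach submanifold, and the projection $\Pi:\widetilde Q\to B$ is Fredholm, so again its regular values are residual. I would argue by contraposition: fix $b$ a regular value of both $\pi$ and $\Pi$, and suppose some eigenfunction $u$ had a singular zero, a point $x_0$ with $u(x_0)=0$ and $du_{x_0}=0$. Using simplicity from the first part, the $Q$-constraint kills the $(\dot u,\dot\lambda)$ components, so the kernel of $d\Pi$ at $(u,\lambda,b,x_0)$ reduces to $\{(0,0,0,\dot x):du_{x_0}(\dot x)=0\}=\ker du_{x_0}$. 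The dimension of this kernel equals the index of $\Pi$ precisely when $du_{x_0}$ is surjective (the regular-value condition for $u$), and is strictly larger whenever the rank of $du_{x_0}$ drops. Constancy of the Fredholm index then forces the cokernel of $d\Pi$ to be nonzero at such a point, contradicting the assumption that $b$ is a regular value of $\Pi$. Hence no eigenfunction of $L_b$ has a singular zero, i.e.\ $0$ is a regular value of each one. Intersecting the two residual sets, and using that residual sets are stable under countable intersection in $B$, yields the asserted residual set.

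The genuinely delicate step is this last index-jump argument, which converts regularity of the bundle-like projection $\Pi$ into regularity of $0$ for each individual eigenfunction; everything else is bookkeeping once the two transversality hypotheses are in hand. The other point demanding care is purely functional-analytic: arranging the function-space setting (mapping $\Phi$ into $C^{k-2}(X)$, with $k$ large relative to the indices, and $B$ a Baire space of data $(g,\alpha)$) so that $\Phi$, the evaluation $\alpha$, and the projections $\pi,\Pi$ are genuinely Fredholm of the stated indices, which is exactly what makes Sard--Smale applicable. I expect the verification of the regular-value hypotheses themselves --- that the data-variations $\dot L_b$ of Lemmas \ref{JJLEM1} and the connection-variation lemma surject onto the relevant cokernels $E_{-\lambda}/\R u$ --- to be the hard analytic input, but in the present abstract formulation of Theorem \ref{U} these are assumptions rather than part of this proof.
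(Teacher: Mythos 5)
Your proposal is correct, but note that the paper itself gives no proof of Theorem \ref{U}: it is quoted verbatim from Uhlenbeck \cite{U} (Theorem 1 and Lemmas 2.7--2.8), and your reconstruction — Sard--Smale applied to the Fredholm projections $\pi:Q\to B$ and $\Pi:\alpha^{-1}(0)\to B$, with simplicity forced by the index-zero computation for $\pi$ and regularity of $0$ forced by the index-jump argument for $\Pi$ (whose index is $\dim M-2$, via the block-triangular structure of the partial differential) — is essentially Uhlenbeck's own argument, i.e.\ the Thom--Smale transversality approach the paper explicitly says underlies \cite{U}. The only details you elide that Uhlenbeck must handle are the properness/$\sigma$-compactness decomposition of $Q$ (restricting $\lambda$ to bounded intervals) needed so that Sard--Smale yields residual rather than merely dense sets of regular values.
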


The next two   propositions give a tool to verify the first  hypothesis of Theorem \ref{U}.
  (see  \cite[Proposition 2.4 and Proposition 2.10]{U}). \begin{proposition}\label{PROP} We have,
\begin{enumerate}
  \item If at points of $\Phi^{-1}(0)$, $D_2 \Phi : T_bB \to H_{k-2}^p(X) \subset H_{-1}^q(X)$ has dense image in $H^q_{-1}(M),  1 < q < \infty$, then
  $0$ is a regular value of $\Phi$;.
  \item Let $J = \mathrm{im} D_2 \Phi$ and assume that for $W \in L^1(M)$ and
$W \in C^2(M - \{y\})$, the property
\[
\int_M W(x) j(x) d\mu_x = 0
\]
for all $j \in J$ implies $W = 0$. Then  $\Phi$ is $C^k$ and has zero as a regular value.
\end{enumerate} \end{proposition}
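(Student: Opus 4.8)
The plan is to follow Uhlenbeck's scheme as packaged in Theorem \ref{U}: at each zero $(u,\lambda,b)$ of $\Phi$ \eqref{CAPPHI} I would show that the full differential $D\Phi$ is surjective with complemented kernel, which is precisely the assertion that $0$ is a regular value, and then feed this into her abstract result. The differential splits according to the three factors as
\[
D\Phi(v,\mu,\dot b) = (L_b+\lambda)v + \mu\,u + D_2\Phi(\dot b),
\]
where $(L_b+\lambda)v$ is the $u$-derivative, $\mu u$ the $\lambda$-derivative, and $D_2\Phi(\dot b)=\dot L_b\,u$ the data-derivative supplied by the variation formulae of Section \ref{PERTURBATIONSECT} (see \eqref{LMDOT}). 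The crucial structural input is that $L_b+\lambda$ is a self-adjoint elliptic operator of Laplace type, hence Fredholm of index $0$ between the relevant Sobolev spaces, with closed range whose $L^2$-orthogonal complement is exactly the finite-dimensional, smooth eigenspace $\ker(L_b+\lambda)$.

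First I would reduce surjectivity of $D\Phi$ to a statement about its cokernel. Suppose $W$ annihilates the range of $D\Phi$ in the pairing $\int_M W\,j\,d\mu$. Testing against $v$ gives $(L_b+\lambda)W=0$ weakly, so by elliptic regularity $W$ is a smooth eigenfunction; testing against $\mu$ gives $\langle W,u\rangle=0$; and testing against $\dot b$ gives $W\perp J$, where $J=\mathrm{im}\,D_2\Phi$. For part (1), since $W$ is then a smooth element of the dual of $H^q_{-1}(M)$ orthogonal to the dense subspace $J$, it must vanish; hence the cokernel is trivial and $D\Phi$ is onto. Because $L_b+\lambda$ is Fredholm, the kernel of $D\Phi$ is closed and complemented, so $0$ is a regular value of $\Phi$, and the first half of Theorem \ref{U} yields one-dimensional eigenspaces.

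For part (2), the refined criterion is what handles the evaluation map $\alpha(u,\lambda,b,x)=u(x)$, whose regular-value property encodes that $0$ is a regular value of the eigenfunctions themselves. Here the analogous cokernel computation produces an obstruction $W$ for which $(L_b+\lambda)W$ is a combination of eigenfunctions and a Dirac mass $\delta_y$ at the evaluation point, again subject to $W\perp J$. The point is that the Green's function of a second-order elliptic operator puts this $W$ in $L^1(M)$ and makes it $C^2$ on $M\setminus\{y\}$, so $W$ is exactly of the type covered by the hypothesis; the annihilation condition $\int_M W\,j\,d\mu=0$ for all $j\in J$ then forces $W=0$. Surjectivity together with the Fredholm splitting gives that zero is a regular value, and the second half of Theorem \ref{U} promotes this to the desired residual genericity.

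The main obstacle I expect is the regularity bookkeeping in part (2): one must verify that the singular obstruction $W$ really lands in $L^1(M)\cap C^2(M\setminus\{y\})$ with the correct behavior at $y$, pin down the precise dual Sobolev pairing in which the orthogonality to $J$ is meant, and confirm that the Dirac-mass term arises with the claimed structure from differentiating the evaluation map. The density and annihilation hypotheses are then the clean inputs to be checked separately for the two families of data $(g,\alpha)$ in the next section, and verifying those — using the variation formulae \eqref{LMDOT} and the connection variation — is where the genuine analysis will reside.
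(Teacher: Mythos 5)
Your proposal is correct and takes exactly the route the paper itself relies on: the paper offers no independent proof of this proposition but quotes it from Uhlenbeck (\cite[Propositions 2.4 and 2.10, Lemma 2.7]{U}), and your Fredholm/cokernel argument for (1) and the Green's-function obstruction $W \in L^1(M)\cap C^2(M\setminus\{y\})$ for (2) are faithful reconstructions of those cited proofs. The only loose end is the regularity bookkeeping you already flag, which is precisely what Uhlenbeck's original argument supplies.
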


The second statement follows from  \cite[Lemma 2.7]{U}:  Let
 $\pi: Q \to B$ be a $C^k$ Fredholm map of index $0$. Then if
$f: Q \times X \to Y$ is a $C^k$ map for $k$ sufficiently large and if $f$
is transverse to $Y'$ then $\{b \in B: f_b: = f|_{\pi^{-1}(b)} \; {\mathrm is\; transverse\; to\;} Y'\}$ is residual in $B$.
Let $$\alpha: f^{-1}(Y') \to B \; {\mathrm be}\; \alpha: f^{-1}(Y') \subset Q \to B.$$

\begin{lemma} \label{REG0LEM} The eigenfunctions of $L_b$ have zero as a regular value
if  $b$ is a regular value of $\pi$ and if $0$ is a regular value of
$\alpha |_{\pi^{-1}(b)} \times M : = \alpha_b$.  \end{lemma}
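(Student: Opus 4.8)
The plan is to unwind the definitions so that the assertion reduces to a single chain-rule computation of the differential of the evaluation map at a zero, exactly along the lines of \cite[Lemma 2.7]{U}. First I would record what the two hypotheses provide. Since $b$ is a regular value of the Fredholm map $\pi: Q \to B$, the fiber $\pi^{-1}(b)$ is a $C^k$ manifold whose points are eigenpairs $(u,\lambda)$ with $(L_b+\lambda)u = 0$; combined with index $0$ this forces the eigenvalues to be simple, so $\ker(L_b+\lambda) = \C u$. The second hypothesis says that the restricted evaluation map $\alpha_b\colon \pi^{-1}(b)\times M \to \C$ has $0$ as a regular value, i.e. $d(\alpha_b)$ is surjective onto $\C$ at every point $(u,\lambda,x)\in \pi^{-1}(b)\times M$ with $\alpha_b(u,\lambda,x)=u(x)=0$.

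Next I would pin down the tangent space of the fiber. Differentiating $(L_b+\lambda)u=0$ along $\pi^{-1}(b)$ with $b$ (hence $L_b$) held fixed, a tangent vector $(\dot u,\dot\lambda)$ satisfies
\[
(L_b+\lambda)\dot u + \dot\lambda\, u = 0 .
\]
Pairing with $u$ and using that $L_b$ is self-adjoint gives $\langle(L_b+\lambda)\dot u,u\rangle = \langle \dot u,(L_b+\lambda)u\rangle = 0$, so $\dot\lambda\,\|u\|^2=0$ and hence $\dot\lambda=0$. Then $(L_b+\lambda)\dot u = 0$, so by simplicity of the eigenvalue $\dot u\in\ker(L_b+\lambda)=\C u$; that is, every fiber-tangent variation of the eigenfunction is a scalar multiple of $u$ itself (a normalization such as $\|u\|=1$ only further restricts the scalar, and does not affect what follows).

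With this in hand, the differential of $\alpha_b(u,\lambda,x)=u(x)$ at a zero $(u,\lambda,x)$, evaluated on a tangent vector $(\dot u,\dot\lambda,v)$ with $\dot u = c\,u$ and $v\in T_xM$, is
\[
d(\alpha_b)_{(u,\lambda,x)}(\dot u,\dot\lambda,v) = \dot u(x) + du_x(v) = c\,u(x) + du_x(v) = du_x(v),
\]
since $u(x)=0$. Thus the internal variation of $u$ within its eigenspace contributes nothing at a zero, and surjectivity of $d(\alpha_b)$ onto $\C$ is equivalent to surjectivity of $v\mapsto du_x(v)$, i.e. to $du_x\colon T_xM\to\C$ being onto. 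This is precisely the statement that $0$ is a regular value of the eigenfunction $u$. As the hypothesis on $\alpha_b$ holds at every zero of every eigenpair in $\pi^{-1}(b)$, every eigenfunction of $L_b$ has $0$ as a regular value, which is the claim.

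I expect the only delicate point to be the first paragraph, namely extracting simplicity of the eigenvalues and the exact identification of $T_{(u,\lambda)}\pi^{-1}(b)$ from ``$b$ is a regular value of $\pi$.'' Simplicity is genuinely needed: if $\lambda$ were degenerate, a fiber-tangent variation $\dot u$ could be an eigenfunction linearly independent of $u$, so $\dot u(x)$ would supply extra directions in $\C$ and the transversality of $\alpha_b$ would no longer force $du_x$ to be onto. Everything else is the bookkeeping of the chain rule, which goes through cleanly once the fiber tangent space has been computed.
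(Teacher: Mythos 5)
Your argument is correct, but note that the paper never actually writes out a proof of Lemma \ref{REG0LEM}: it presents the lemma as part of its review of Uhlenbeck's machinery, attributing it to \cite[Lemma 2.7]{U}, and merely records afterwards the two identities your computation rests on, namely the description of $T_{u,\lambda,b}Q$ (including the normalization constraint $\int_X uv\, dV_g = 0$) and the evaluation formula $D_1\alpha(v,0,c,0) = v(x) = \dot{u}(x)$. Your chain-rule verification is exactly the content hidden in that citation, so methodologically it is the same argument made self-contained: identify the fiber tangent space, kill $\dot{\lambda}$ by self-adjointness, use simplicity to conclude $\dot{u} \in \C u$, and observe that at a zero $u(x)=0$ the fiber directions contribute nothing, so surjectivity of $d(\alpha_b)$ at $(u,\lambda,x)$ is equivalent to surjectivity of $du_x$. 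The one step you should treat more carefully is the claim that regularity of $b$ for the index-$0$ map $\pi$ by itself ``forces'' simplicity: with $Q$ as literally defined in the paper (no normalization of $u$), every fiber contains a punctured complex line $\C^* u$, so fibers are never $0$-dimensional and the dimension count does not apply as stated; simplicity is really supplied by Uhlenbeck's Theorem 1 (quoted as Theorem \ref{U} in the paper), or by your count after imposing $\|u\|_{L^2}=1$, and even then the phase circle $e^{i\theta}u$ survives the normalization when $u$ is complex-valued. Your hedge that $\dot{u}=cu$ yields $\dot{u}(x)=0$ at a zero regardless of the scalar $c$ is precisely what makes the argument immune to this ambiguity, so the proof stands, but the simplicity step should be credited to the ambient hypothesis that $0$ is a regular value of $\Phi$ rather than derived from the index alone.
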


In the following, we use that
\[
T_{u, \lambda, b} Q  =  \{(v, \eta, s) \in H^{1,0}(X) \times \mathbb{C} \times T_b B: \int_X u v dV_g  = 0, \; (L_b + \lambda) v + \eta u + D_2 \phi s = 0\}.
\]
and that  the  image of $D_2 \Phi$ is given by
\[
J = \mathrm{Image} D_2 \Phi_{(u, \lambda, b)}  = \{  \dot{\Delta} u:  \dot{\Delta} \; \text{ is a variation of } \; \Delta\; \text{ along a curve of metrics}\}.
\]

We often write $$v = \dot{u}, \eta = \dot{\lambda}, D_2(\Phi) s = \lambda \dot{\Delta} u, \;\; (\Delta + \lambda) \dot{u} + (\dot{\Delta} + \dot{\lambda}) u = 0.$$ Further, let $D_1 \alpha$ denote the derivative of $\alpha$ along $Q$. Then,
$$ D_{(u, \lambda, b)} \alpha(v, 0, c, 0) = v(x) = \dot{u}(x). $$

\subsection{Generic simplicity of eigenvalues }
In this section, we sketch the proof that the eigenvalues of $\Delta_G$ are of multiplicity one for generic bundle metrics; the details are similar
to those in \cite{JZ20} once the variations are calculated  and we do not repeat the overlapping arguments.
According to Theorem \ref{U},  we need to prove that the map $\Phi$ \eqref{CAPPHI}  is $C^k$ and has zero as a regular value.  By Proposition \ref{PROP}(b),
it suffices to show that  $W \in L^1(M)$ and
$W \in C^2(M - \{y\})$, the property $\int_M W(x) j(x) d\mu_x = 0$ for all $j \in J$
implies $W = 0$. In our problem,
$j  = \mathrm{im } D_2 \Phi$ are given by $j = \dot{\Delta}_G u$ where $(\Delta_G + \lambda) u =0$ for some $\lambda$, and where $\dot{\Delta}_G$ arises from variations
from  (i) general perturbations of  metrics on $M$, or
(ii) the space of connection 1-forms $\alpha$ on $\pi: X \to M$.
In the calculations we use that $\Delta_G =  \bigoplus_m L_m$ where $L_m$ operates on equivariant functions.
Not only is the spectrum of each $L_m $ \eqref{LmDEF}  simple, but the spectra of $L_m$ and $L_{m'}$
are disjoint sets if $m \not= m'$ for generic bundle metrics $G$. This last step is proved in \cite[Lemma 4.9]{JZ20}; the same proofs works in the
present higher dimensional setting and will not be repeated here.

\subsection{Generic base metrics}
The following Lemma proves that the criterion in Proposition \ref{PROP}

\begin{lemma}\label{JJLEM2}
Let $f$ be an eigenfunction of $L_m$ with the nonzero eigenvalue $\lambda \neq 0$. Let $g(t) $ be a $1$-parameter deformation of $g$ with
$g(0) = g$ and $b(t) g^{-1} = g(t)^{-1}$. Then the image of $\dot{\Delta} f$ is dense. \end{lemma}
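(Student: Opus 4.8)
The plan is to verify the hypothesis of Proposition~\ref{PROP}(2): it suffices to show that if $W \in L^1(M)\cap C^2(M\setminus\{y\})$ satisfies $\langle \dot L_m f, W\rangle_{L^2(g)} = 0$ for every base–metric variation, then $W \equiv 0$, since this is exactly the assertion that the image $J = \{\dot L_m f\}$ is dense. Here $f$ is the fixed eigensection with $L_m f = \lambda f$, $\lambda \neq 0$, and the variations are parametrized by the symmetric $2$–tensor $b = b'(0)$, which ranges over \emph{all} smooth symmetric $2$–tensor fields on $M$. I would start from the explicit formula of Lemma~\ref{JJLEM1}, abbreviating $\xi = df + imf\eta$ and $\zeta = dW + imW\eta$ with $\eta = u^*(\alpha - d\theta)$, so that the hypothesis reads $\int_M (\xi,\zeta)_{bg^*}\,dV_g + \tfrac12\int_M (\xi, W\,d\,\mathrm{Tr}\,b)_{g^*}\,dV_g = 0$ for all $b$.

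The decisive move is to exploit the full freedom in $b$. I would split $b = b_0 + \tfrac1n (\mathrm{Tr}\,b)\,g$, $n = \dim M$, into its trace–free part $b_0$ and its pure–trace part, and integrate by parts in the second term to move the derivative off $\mathrm{Tr}\,b$. Since $b_0$ (trace–free symmetric) and $\phi := \mathrm{Tr}\,b$ (scalar) may be prescribed independently and arbitrarily on $M\setminus\{y\}$, the identity decouples into two pointwise equations valid wherever $W$ is $C^2$: first, condition (A), that the trace–free symmetric part of the tensor $\xi\otimes\bar\zeta$ vanishes, arising from the arbitrary $b_0$; and second, condition (B), a scalar identity of the schematic form $\tfrac1n\, g^*(\xi,\bar\zeta) + \tfrac12\, d_g^*(W\xi) = 0$, arising from the arbitrary scalar $\phi$. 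Condition (A) is purely algebraic: a symmetrized tensor product of two covectors is pure trace only if the covectors are proportional, so (A) forces $\xi$ and $\bar\zeta$ to be pointwise parallel on the open set where they are nonzero. Writing $\xi = D_m f$ and $\zeta = D_m W$ for the twisted differential $D_m u = du + imu\eta$, this says that the (covariant) gradients of $f$ and of $W$ are everywhere colinear.

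From here the endgame is to convert the colinearity (A), the scalar relation (B), and the eigenequation $L_m f = \lambda f$ into an overdetermined first–order system that pins $W$ to $f$: colinearity forces $W$ to be, locally, a function of $f$ transported along the twisted gradient flow of $f$, and (B) together with $\lambda \neq 0$ then forces this function to be linear, i.e. $W$ is a constant multiple of $f$ (equivalently, $W$ is itself a weak $\lambda$–eigensection of $L_m$). A final appeal to Aronszajn unique continuation — which prevents $f$ and $df$ from vanishing together on any open set, so that the pointwise relations propagate across the nodal and critical loci — together with the fact that the only admissible singularity of $W$ is the single point $y$, yields $W\equiv 0$.

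The step I expect to be the main obstacle is precisely this last propagation. The colinearity extracted from (A) holds only away from the locus where $\xi = D_m f$ degenerates, namely the critical/nodal set of the eigensection, and $f$ is complex while $W$ is a real density, so one must disentangle the real and imaginary parts of (A)–(B) and control this degeneracy set. Using $\lambda \neq 0$ to exclude $f$ constant, the codimension-$2$ bound on the common zero set of $f$ and $df$, and unique continuation to globalize the local identity is the crux; by contrast the preliminary computations — the integration by parts and the decoupling into (A) and (B) — are routine.
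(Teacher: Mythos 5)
There is a genuine gap, and it sits exactly at the step you flagged as ``purely algebraic.'' Your condition (A) says that the symmetrized product $\xi\odot\bar\zeta:=\tfrac12(\xi\otimes\bar\zeta+\bar\zeta\otimes\xi)$ equals $c\,g$ with $c=\tfrac1n g^*(\xi,\bar\zeta)$. The claim that this forces $\xi$ and $\bar\zeta$ to be \emph{proportional} is false --- in fact it is backwards. If $\xi$ and $\bar\zeta$ are both nonzero and proportional, then $\xi\odot\bar\zeta$ has rank one, and a rank-one tensor is never a nonzero multiple of $g$ when $\dim M\geq 2$; so proportionality is \emph{excluded} by (A), not implied by it. The correct deduction, and the one the paper makes, is a rank count: $\xi\odot\bar\zeta$ has rank at most $2$, while $c\,g$ has rank $n=\dim M$ when $c\neq 0$. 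Since $n\geq 3$ --- this is precisely where the higher-dimensional hypothesis enters, and your argument never invokes it; for $n=2$ the statement genuinely fails, e.g.\ $\xi=(1,i)$, $\bar\zeta=(1,-i)$ gives $\xi\odot\bar\zeta=\mathrm{Id}$ in flat coordinates --- one concludes $c=0$ and then $\xi\odot\bar\zeta=0$, which forces $\xi=0$ or $\bar\zeta=0$ at each point: the supports of $\xi=df+imf\eta$ and $\zeta=dW+imW\eta$ are \emph{disjoint}. In particular the set where both are nonzero --- the set on which your entire ``twisted gradient flow'' construction is supposed to run --- is empty, so the middle portion of your argument has no content.

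The endgame is also a non sequitur independently of (A): had you really shown $W=cf$, unique continuation could not yield $W\equiv 0$, since $f$ is a genuine nonzero eigensection; you would still need to rule out $c\neq 0$ (say, by exhibiting a variation with $\langle \dot L_m f, f\rangle\neq 0$), and nothing in your sketch does this. The paper's route avoids all of these difficulties: the rank argument delivers simultaneously $F:=d_g^*(\bar W\xi)=0$ and the disjointness of supports; expanding $F$ by the Leibniz rule and using the eigenvalue equation $L_mf=\lambda f$ together with $\zeta=0$ on the support of $\xi$ gives the pointwise identity $\lambda\bar W f=0$, so $\lambda\neq 0$ forces $W=0$ on the open set $\{f\neq 0\}$, and Uhlenbeck's Proposition 2.10 (cited via Proposition \ref{PROP}) finishes. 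Your decomposition of $b$ into trace-free and pure-trace parts, and the integration by parts moving the derivative off $\mathrm{Tr}(b)$, are equivalent to the paper's component-by-component analysis and are fine; it is the algebraic conclusion you draw from them, and everything downstream of it, that must be replaced.
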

\begin{proof}
We continue the perturbation calculation of Lemma \ref{JJLEM1}. In local coordinates, let
\begin{align*}
df+imf\eta &= a_i dx_i\\
dW+imW\eta &= w_i dx_i,
\end{align*}
and let
\[
d_g^* (\bar{W}(df+imf\eta)) = F.
\]
Then
\[
\langle \dot{L}_m f, W\rangle_{L^2(g)}=0
\]
is equivalent to
\begin{equation}\label{orth}
\int_M a_i b^{ij}g^{jk} w_k + \frac{F}{2}\mathrm{Tr}(b) dV_g = 0.
\end{equation}
We now assume for contradiction that $\dot{L}_m f$ does not have dense image, and $W$ is a nonzero section of $L^m$ which is orthogonal to $\dot{L}_m f$ under any variation. For \eqref{orth} to hold for all $b$, we must have
\[
a_i g^{jk}w_k + a_j g^{ik}w_k = 0
\]
for all fixed $i\neq j$, and
\[
a_i g^{ik} w_k = -\frac{F}{2}
\]
for all fixed $i$. In particular, we have
\[
(a_i dx_i)\wedge (g^{jk} w_k dx_j) = -\frac{F}{2} (dx_i \vee dx_i).
\]
Observe that the left hand side of the equation has rank at most $2$, while the right hand side has rank $n$ if $F\neq 0$, or $0$ if $F=0$. Because $n\geq 3$, we have
\[
F=0
\]
and
\[
(a_i dx_i)\wedge (g^{jk} w_k dx_j)=0
\]
on $M$. Because $g$ is clearly invertible, this implies that the support of
\[
df+imf\eta ~\text{ and } ~dW+imW\eta
\]
are disjoint, and
\[
d_g^* (\bar{W}(df+imf\eta))=0
\]
on $M$. Now we use the assumption that $f$ is an eigenfunction of $L_m$, i.e.,
\begin{equation}\label{eigenn}
\lambda f  = L_m f = d_g^* (df+imf\eta) - im (df+imf\eta, \eta)_{g^*}
\end{equation}
Then we have
\begin{align*}
0 &= d_g^* (\bar{W}(df+imf\eta))\\
& = -(df+imf\eta, dW)_{g^*} + \bar{W} d_g^*(df+imf\eta)\\
&= (df+imf\eta,imW\eta )_{g^*} + \bar{W} (\lambda f + im (df+imf\eta, \eta)_{g^*})\\
&= \lambda \bar{W} f,
\end{align*}
where we used $dW+imW\eta=0$ on the support of $df+imf\eta$ and \eqref{eigenn} in the third equation. This forces $W=0$ on an open set, and the Lemma follows from \cite[Proposition 2.10]{U}.
\end{proof}

\subsubsection{Generic connection $1$-forms}

\begin{lemma}\label{MULT1alpha} For generic connection 1-forms $\alpha$, the spectrum of $\Delta_G$ is of multiplicity one. \end{lemma}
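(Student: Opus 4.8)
The plan is to adapt the transversality machinery developed in the previous lemmas to the connection-varying family, mirroring the structure of Lemma \ref{JJLEM2} but using the connection-variation formula instead of the metric-variation formula. Specifically, I would fix the base metric $g$ and let $\alpha(t)$ range over a $C^k$ family of connection $1$-forms, so that the relevant variation of the horizontal Laplacian $L_m$ is given by the deformation computed in Section \ref{VARYALPHA}, namely
\[
\dot{L}_m f = \left( -2i\, G(df, \dot\alpha) + i f\, d_g^* \dot\alpha + 2 G(\dot\alpha, \alpha) f \right) e_L,
\]
where $\dot\alpha$ ranges freely over horizontal $1$-forms on $M$ (equivalently over $\Omega^1(M,\R)$, by the parameterization of connections noted after Lemma \ref{GLEM}). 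By Theorem \ref{U} together with Proposition \ref{PROP}(b), it suffices to show that if $W \in L^1(M)$, $W \in C^2(M \setminus \{y\})$, satisfies $\langle \dot{L}_m f, W\rangle_{L^2(g)} = 0$ for all admissible variations $\dot\alpha$, where $f$ is an $L_m$-eigenfunction, then $W \equiv 0$.

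The key steps, in order, are as follows. First I would integrate by parts to move all derivatives off $\dot\alpha$ and onto the product $\bar{W} f$ (and $\bar{W}\, df$), rewriting the orthogonality condition $\langle \dot{L}_m f, W\rangle = 0$ as the statement that a certain $1$-form built from $f$, $df$, $W$, $dW$, and $\alpha$ is $L^2$-orthogonal to \emph{every} $1$-form $\dot\alpha$. Because $\dot\alpha$ is an unconstrained horizontal $1$-form, this orthogonality forces that vector-valued coefficient to vanish pointwise (on the open set where $W \in C^2$). Second, I would split the resulting pointwise identity into its real and imaginary parts relative to the eigenfunction $f = a + ib$; the vanishing of the coefficient gives a first-order relation tying $\bar{W} f$, $\bar{W}\, df$, and lower-order terms together. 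Third, I would substitute the eigenvalue equation \eqref{eigenn} for $f$ (now in its connection form) to collapse the first-order relation into an algebraic one of the shape $\lambda \bar{W} f = 0$ on an open set, exactly as in the final display of Lemma \ref{JJLEM2}. Since $\lambda \neq 0$ and $f$ cannot vanish on an open set (by unique continuation for the elliptic operator $L_m$), this forces $W = 0$ on an open set, and then \cite[Proposition 2.10]{U} upgrades this to $W \equiv 0$ on all of $M$.

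I would emphasize that this lemma only asserts generic simplicity of the spectrum of $\Delta_G$, so only the map $\Phi$ of \eqref{CAPPHI} — not the auxiliary evaluation map $\alpha \colon Q \times M \to \C$ — is in play here; the regular-value-of-eigenfunctions claim for the connection-varying family is a separate matter. Thus the target is precisely the density of the image of $D_2\Phi$, verified through Proposition \ref{PROP}(b). I would also recall, as noted in the discussion of generic simplicity above, that disjointness of the spectra of $L_m$ and $L_{m'}$ for $m \neq m'$ follows verbatim from \cite[Lemma 4.9]{JZ20} and need not be reproved.

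The main obstacle I anticipate is the structural difference between the metric and connection variations. In Lemma \ref{JJLEM2} the dense-image argument crucially exploited a \emph{rank} comparison — the left side of the wedge identity has rank at most $2$ while the right side has rank $n \geq 3$ — to conclude $F = 0$ and force disjoint supports. The connection variation $\dot\alpha$, being a single $1$-form rather than a symmetric $2$-tensor, does not produce the same wedge/rank dichotomy, so the argument cannot be copied directly and I expect instead a cleaner pointwise vanishing: unconstrained variation of a $1$-form against a fixed $1$-form coefficient forces that coefficient to be zero outright, with no dimension hypothesis needed. The delicate point is therefore not the algebra but verifying that the boundary/integration-by-parts manipulations are legitimate given only $W \in L^1 \cap C^2(M \setminus\{y\})$, and that the singular point $y$ does not obstruct the final unique-continuation step; this is exactly the regularity bookkeeping that Uhlenbeck's framework \cite{U} is designed to handle, and I would lean on it at the close.
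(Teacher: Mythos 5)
Your setup and your first two steps coincide with the paper's proof: fix $g$, vary $\alpha$, reduce via Theorem \ref{U} and Proposition \ref{PROP} to showing that an obstruction $W = F e_L$ orthogonal to all variations must vanish, integrate by parts to strip $d_g^*$ off $\dot\alpha$, and use that $\dot\alpha$ is an unconstrained $1$-form to force the pointwise identity
\[
\left( - 2 i\, df + 2 \alpha f\right)\bar{F} + i\, d ( f \bar{F}) = 0
\]
on the set where $W$ is smooth. The gap is your third step. This identity is purely first order --- no $\Delta_g f$ appears in it --- so there is nothing to substitute the eigenvalue equation \eqref{eigenn} into, and no manipulation can collapse it to $\lambda \bar{W} f = 0$. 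Indeed, that conclusion is provably unreachable without invoking a property of $\alpha$: if $\alpha$ is closed on an open set, say $\alpha = 0$ in a local gauge, then $W = \bar{f}\, e_L$ (the conjugate of the eigensection) is a nonzero obstruction, since $\langle \dot{L}_m f, W \rangle = \int_M \left( -2 i f\, g^*(df, \dot\alpha) + i f^2 d_g^* \dot\alpha \right) dV_g = 0$ for every $\dot\alpha$ after one integration by parts. So density of the image of $D_2\Phi$ genuinely fails at (locally) flat connections, and any correct proof must use the connection itself somewhere; your proposal never does, which is the structural tell.

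The paper's endgame is different. On the open set where $f \bar{F} \neq 0$ (nonempty, since $f \neq 0$ on a dense open set by unique continuation and $W \not\equiv 0$ is continuous away from $y$), dividing the displayed identity by $f\bar{F}$ gives $\alpha = \tfrac{i}{2}\, d \log ( f / \bar{F})$, i.e.\ $\alpha$ is a logarithmic differential there, whence $d\alpha = 0$ on that open set. This is the contradiction: a generic $C^k$ connection $1$-form has curvature vanishing on no open set (and on a nontrivial bundle $d\alpha$ cannot vanish identically), so for generic $\alpha$ the obstruction $W$ must vanish, the image of $D_2\Phi$ is dense, and Uhlenbeck's theorem yields simplicity. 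In short, the disanalogy with Lemma \ref{JJLEM2} that you correctly flagged cuts the other way from what you guessed: in the metric case the contradiction comes from the eigenvalue equation (the display $\lambda \bar{W} f = 0$), while in the connection case it must come from non-flatness of $\alpha$, with the eigenvalue equation playing no role in the final step.
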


\begin{proof} The variation of $\Delta_G$ induced by varying $\alpha$ is calculated  Section \ref{VARYALPHA}.
Assume for purposes of contradiction of Proposition \ref{PROP},  that there exists  $W \in L^1(M)$ and
$W \in C^2(M - \{y\})$ with  the property $\int_M W(x) \dot{\Delta}_G u d\mu_x = 0$  some $\Delta$-eigensection $u= f e_L^m$ and all variations. We need to prove that $W =0$.
The argument is almost identical to that in \cite{JZ20}, so we only briefly summarize it.

We prove this by representing equivariant functions on $X$ as sections of the associated bundle, and write sections relative to a local frame $e_L$.
    Exactly as in \cite{JZ20}, if the image is not dense, there exists $W = F e_L \not= 0$ so that (by Lemma \ref{VARYALPHA}),
    $$\begin{array}{l} \int_X \left(   - 2 i  G(df, \dot{\alpha})   + if d_g^* \dot{\alpha} + 2G(\dot{\alpha}, \alpha)  f\right) \bar{F} e^{-\psi} dV_g = 0 \\ \\ \iff
    \int_X \left(  ( - 2 i  G(df, \dot{\alpha})   + 2G(\dot{\alpha}, \alpha)  f) \bar{F}
     + i G(\dot{\alpha},  d (f \bar{F}))  \right) e^{-\psi} dV_g = 0, \end{array} $$
     for all $\dot{\alpha} \in \Omega^1(X).$ Here, we integrated $d_g^*$ by parts to remove it from $\dot{\alpha}$.
     As in \cite{JZ20}, this boils down to  the equation,      $$   ( - 2 i  df   + 2 \alpha  f) \bar{F}
     + i   d (f \bar{F})   = 0 \iff ( -i df + 2 \alpha f)\bar{F} + i f d\bar{F} = 0 \implies d \alpha =0. $$
     on a dense open set;  but a generic $C^k$ 1-form does not satisfy $d \alpha =0$.
 \end{proof}

\subsection{For generic bundle metrics the eigenfunctions have regular nodal sets}
To prove this statement we again use Theorem \ref{U}. To establish the hypothesis, we now use Lemma \ref{REG0LEM}.

Let
$$Q := \{(u, \lambda, b) \in C^k(X) \times \mathbb{R}_+ \times B: \Phi(u, \lambda, b) = 0\}, $$
and let
$$\alpha: Q \times M \to \C: \alpha(u, \lambda, b, x) = u(x). $$

To complete the proof of Theorem \ref{GENERIC1} it  suffices to recall the following Proposition, proved in \cite[Proposition 4.8]{JZ20}.
\begin{proposition}\label{SURJ2}
For each $m$,  $D_1 \alpha_m$ is surjective to $\C$.
We  need to show that for each $x \in M$,
\[
\alpha_m: Q \times \{x\} \to \C: \alpha(u, \lambda, g, x) = u(x)
\]
has $0 \in \C$ as a regular value, i.e., that
$$D_1 \alpha (\cdot, x): T_{u, \lambda, b}(Q) \to \C,\;\; D_1 \alpha(\cdot, x)_{(u, \lambda, g)} (\delta u(x), 0, c, 0) = \delta u(x)$$
is surjective to $\C$, where $D_1$ is the differential along $Q$ with $x \in M$ held fixed.\end{proposition}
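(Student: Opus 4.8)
The plan is to show that, as we vary the data $b=(g,\alpha)$, the first variation $v=\dot u$ of the eigenfunction can be prescribed arbitrarily at the fixed point $x$, and that this reduces to the density results already established for the operator variation. Recall from the description of $T_{u,\lambda,b}Q$ that a tangent vector $(v,\eta,s)$ satisfies $\int_X uv\,dV_g=0$ together with the linearized eigenvalue equation $(L_b+\lambda)v+\eta u+D_2\Phi\,s=0$, and that $D_1\alpha(\cdot,x)(v,\eta,s)=v(x)$. Since we have already proved that the eigenvalues of $L_b$ are simple, the kernel of $L_b+\lambda$ is the line $\C u$, so the operator admits a partial inverse (Green operator) $\gcal$ whose range is the orthogonal complement $u^\perp$ and which annihilates $u$.

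First I would solve the linearized equation. The Fredholm solvability condition forces $\langle \eta u+D_2\Phi\,s,u\rangle=0$, which determines $\eta$ uniquely; then $v$ is determined modulo $\C u$, and the constraint $\int_X uv\,dV_g=0$ fixes the component of $v$ along $u$. Applying $\gcal$ and using $\gcal u=0$ gives $v=-\gcal(D_2\Phi\,s)$. Consequently the image of $D_1\alpha(\cdot,x)$ is exactly $T(V)$, where $V:=\{D_2\Phi\,s:\ s\in T_bB\}$ is the space of operator variations applied to $u$, and $T(h):=-[\gcal h](x)$ is the evaluation at $x$ of the Green operator output.

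Next I would record two properties. By elliptic regularity $\gcal$ gains two derivatives, and by Sobolev embedding (for $k$ sufficiently large) point evaluation is bounded, so $T$ is a continuous $\C$-linear functional on the relevant Sobolev space $H$; moreover $T\not\equiv0$ because the Green kernel $\gcal(x,\cdot)$ is not identically zero, and a nonzero $\C$-linear functional into $\C$ is surjective, so $T(H)=\C$. On the other hand, the density lemmas already proved, namely Lemma \ref{JJLEM2} for base-metric variations and the density underlying Lemma \ref{MULT1alpha} for connection variations, assert precisely that $V$ is dense in $H$.

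Finally I would combine these. Continuity of $T$ together with density of $V$ gives $\C=T(H)=T(\overline V)\subseteq\overline{T(V)}$, so $T(V)$ is dense in $\C$. But the variations $s$ are real symmetric $2$-tensors and real $1$-forms, so $V$ is an $\R$-linear subspace and $T$ is in particular $\R$-linear; hence $T(V)$ is an $\R$-linear subspace of $\C\cong\R^2$. Every proper $\R$-linear subspace of $\R^2$ is closed and therefore not dense, so the dense subspace $T(V)$ must be all of $\C$, which is the asserted surjectivity of $D_1\alpha_m$. I expect the only genuine subtlety, and hence the main obstacle, to be reconciling the real structure of the admissible variations with the two-real-dimensional complex target: this is exactly why one passes through the density statement rather than trying to realize the $\R$- and $i\R$-directions of $v(x)$ separately, and why one must take $k$ large enough that $T$ is a continuous functional on the space in which $V$ is dense.
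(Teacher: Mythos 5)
Your proposal is correct and takes essentially the same route as the paper: both solve the linearized eigenvalue equation via the Green operator $G_{m,\lambda}$ of $L_b+\lambda$ on $[\ker(D_m+\lambda)]^{\perp}$, identify the image of $D_1\alpha(\cdot,x)$ with evaluations at $x$ of that Green operator applied to the operator variations $D_2\Phi\,s$, and invoke the density of those variations (Lemma \ref{JJLEM2} and its connection-variation analogue) to pass from the variations to all of $[\ker(D_m+\lambda)]^{\perp}$. The paper packages the endgame as the absurdity that $f(x)=0$ (or a fixed real-linear combination of $\mathrm{Re}\,f(x),\mathrm{Im}\,f(x)$ vanishes) for every $f\perp\ker(D_m+\lambda)$, using $G_{m,\lambda}\bigl[(D_m+\lambda)f\bigr]=f$; this is the same point as your observation that a proper $\R$-linear subspace of $\C$ is closed and hence cannot be dense.
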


The proof is very similar to that in \cite[Proposition 4.8]{JZ20}, so we only sketch enough of it to ensure that the argument used there
applies to our new types of deformations.

The proof is based on general features of the Green's function $$G_{m, \lambda}: [\ker (D_m + \lambda)]^{\perp} \to  [\ker (D_m + \lambda)]^{\perp},$$ for  i.e. the Schwartz kernel of the resolvent ($D_m(g) + \lambda
)^{-1}$ on the space where it is well-defined.

We  need to show that for each $x \in M$,
\[
\alpha_m: Q \times \{x\} \to \C: \alpha(u, \lambda, g, x) = u(x)
\]
has $0 \in \C$ as a regular value, i.e., that
$$D_1 \alpha (\cdot, x): T_{u, \lambda, b}(Q) \to \C,\;\; D_1 \alpha(\cdot, x)_{(u, \lambda, g)} (\delta u(x), 0, c, 0) = \delta u(x)$$
is surjective to $\C$, where $D_1$ is the differential along $Q$ with $x \in M$ held fixed.

As in the proof of \cite[Proposition 4.8]{JZ20}, $D_1 \alpha $
is  surjective to $\C$ unless for all $j \bot \ker (D_m(g) + \lambda)$,
either the real or imaginary parts of
$$G_{m, \lambda} (j)(x) =\int_M G_{m, \lambda}(x,y) j(y) dV(y) $$
vanishes  for every such $j$.
Since $j = [D_m(g) + \lambda] f$ where $\int f = 0$ we would  get  the absurd conclusion that
$$f(x) = 0, \;\; \forall f \bot \ker (D_m(g) + \lambda). $$
This is  impossible, concluding the proof.

\section{Proof of connectivity and smoothness of nodal sets and the $2$ nodal domain theorem for equivariant eigenfunctions with regular nodal set}
In this section, we prove the main result on connectivity. The proof is entirely different, and much simpler than, the proof of the analogous statement in
\cite{JZ20} and in particular does not use a local study of eigenfunctions as in \cite[Section 5]{JZ20}.
Recall from \eqref{REALHYP} that
\begin{equation*}
Z_{\mathrm{Re} \hat{s}} : = \{\mathrm{Re} \hat{s}(\zeta) = 0, \zeta \in \partial D_h^*\}.
\end{equation*}

\begin{proposition} \label{COVERPROP}
If $\hat{s} \in H_m(X)$ is an equivariant eigenfunction, then  natural projection $\pi: Z_{\mathrm{Re} \hat{s}} \to M$ is an $2m$-fold covering
map on the complement of the complex zero set $\{z~:~f_{m,j}(z) e_L^m=0\} \subset M$.
\end{proposition}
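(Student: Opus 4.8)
The plan is to pass to the local picture via a unitary section and to reduce the statement to an elementary description of the fibrewise zeros of a cosine, the only real content being the transversality of these zeros off the complex zero set together with the fact that the local pictures glue globally. First I would trivialize $X$ over an open set $U \subset M$ by a unitary section ${\mathfrak u}: U \to X$, writing $x = r_\theta {\mathfrak u}(y)$ and using $(y,\theta)$ as coordinates exactly as in Section~\ref{QUADSECT}. In these coordinates the equivariant eigenfunction reads $\hat s(y,\theta) = f_{m,j}(y)\, e^{im\theta}$, where $f_{m,j} = {\mathfrak u}^*\hat s$ is the local expression of the corresponding section of $L^m$. Over the set $U \setminus \{f_{m,j}=0\}$ we may write $f_{m,j}(y) = |f_{m,j}(y)|\, e^{i\arg f_{m,j}(y)}$ with $|f_{m,j}|>0$ and a locally defined smooth argument, so that
\[
\mathrm{Re}\,\hat s(y,\theta) = |f_{m,j}(y)|\,\cos\!\big(\arg f_{m,j}(y) + m\theta\big).
\]
Thus, on the complement of the complex zero set, the fibrewise zero set is exactly the set of $\theta$ solving $\cos(\arg f_{m,j}(y)+m\theta)=0$, an equally spaced (hence finite, of constant cardinality) subset of the fibre.

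The key point is that these fibrewise zeros are nondegenerate \emph{precisely because} we have removed $\{f_{m,j}=0\}$. Differentiating along the fibre,
\[
\frac{\partial}{\partial\theta}\,\mathrm{Re}\,\hat s(y,\theta) = -m\,|f_{m,j}(y)|\,\sin\!\big(\arg f_{m,j}(y)+m\theta\big),
\]
and at a zero of the cosine the sine equals $\pm 1$; since $m\neq 0$ and $|f_{m,j}|>0$ this derivative is nonzero. By the implicit function theorem $Z_{\mathrm{Re}\hat s}$ is, over a small ball in $U\setminus\{f_{m,j}=0\}$, a disjoint union of smooth graphs $\theta=\theta_k(y)$, on each of which $\pi$ is a diffeomorphism. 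Hence $\pi|_{Z_{\mathrm{Re}\hat s}}$ is a local homeomorphism with discrete fibres of locally constant cardinality. Together with properness (the fibres are finite and $Z_{\mathrm{Re}\hat s}$ is closed in the compact $\pi^{-1}(K)$ over compact $K$), this exhibits $\pi$ as a covering map over each component of $M\setminus\{f_{m,j}=0\}$, of degree equal to the fibrewise number of zeros of the cosine.

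What remains, and is the only genuinely global issue, is that $f_{m,j}$ and $\arg f_{m,j}$ depend on the choice of unitary frame: changing ${\mathfrak u}$ multiplies $f_{m,j}$ by a unit and shifts $\theta$, so that neither $|f_{m,j}|$ alone nor $\arg f_{m,j}$ is globally defined. However $\hat s$, and therefore the subset $Z_{\mathrm{Re}\hat s}\subset X$ and the complex zero locus $\{z : f_{m,j}(z)e_L^m = 0\}\subset M$, are intrinsically defined; thus the local graphs patch to a well-defined covering over all of $M$ minus the complex zero set, with frame-independent degree. I expect the main obstacle to be conceptual rather than computational: one must recognize that the complex zero set is exactly the degeneracy (branch) locus — it is where $|f_{m,j}|$ vanishes and the fibrewise cosine zeros collide — which is precisely why it must be excised for $\pi$ to be an honest covering, and one must verify the frame-change invariance so that the local models glue consistently. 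The fibre cardinality, read off from $\cos(\arg f_{m,j}+m\theta)=0$, then yields the covering asserted in the Proposition.
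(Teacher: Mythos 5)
Your proposal is correct and takes essentially the same route as the paper: trivialize over a unitary frame, reduce to the fibrewise trigonometric equation (the paper's Lemma \ref{REG} writes it as $a_{m,j}\cos m\theta - b_{m,j}\sin m\theta=0$, i.e.\ $\tan m\theta = a_{m,j}/b_{m,j}$, whereas you use the polar form $|f_{m,j}|\cos(\arg f_{m,j}+m\theta)=0$), and conclude that the zeros admit smooth local determinations off the complex zero locus; your implicit-function-theorem, properness, and frame-invariance remarks simply make explicit what the paper compresses into ``locally there exist smooth determinations of the zeros.'' One caveat: the fibre cardinality your computation (and the paper's Lemma \ref{REG}) gives is $2m$, not $m$, so the ``$m$-fold'' in the Proposition's statement is an inconsistency internal to the paper rather than a defect of your argument.
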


\begin{proof}
The proof is almost the same as in \cite{JZ20} and consists of a sequence of Lemmas.  In the notation of Section \ref{EQEIGSECT},
we  denote by $\zcal_{f_{m,j}}$ the  zero set of the eigensection $f_{m,j} e_L^m$ on $M$:
\begin{equation*}
\zcal_{f_{m,j}} = \{z \in M: f_{m,j}(z) = 0\}.
\end{equation*}
We denote by $\zcal_{\phi_{m,j}}$  the nodal set of the (complex-valued) equivariant eigenfunction $\phi_{m,j}$ on $X$.  Under the natural projection $\pi: X \to M$,
\begin{equation*}
\zcal_{\phi_{m,j}}  = \pi^{-1} \zcal_{f_{m,j}}.
\end{equation*}

\end{proof}

\begin{lemma} \label{REG}
For $z \in M$ such that $f_{m, j} (z)\neq 0$, there exist
 $2 m$ distinct  solutions $v$ of  $\mathrm{Re} f_{m,j} e_L^m(v)= 0$ with  $v \in L^*_z X$.
 \end{lemma}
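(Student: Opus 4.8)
The plan is to work fiberwise over a fixed point $z\in M$ with $f_{m,j}(z)\neq 0$. Since the fiber $\pi^{-1}(z)\subset X=\partial D_h^*$ is a circle, a point $v\in L_z^*X$ on the unit circle can be written as $v = \lambda = e^{i\theta}\lambda_0$ for a fixed unit covector $\lambda_0$, and the lift formula $\hat s(z,\lambda)=\lambda^{\otimes m}(s(z))$ shows that $\mathrm{Re}\,\hat s$ restricted to the fiber is a trigonometric function of $\theta$. First I would unwind the definitions: writing $f_{m,j}(z)=a_{m,j}(z)+ib_{m,j}(z)$ as in \eqref{ubab}, the real part of the lifted eigenfunction along the fiber is
\[
u_{m,j}(z,\theta) = a_{m,j}(z)\cos m\theta + b_{m,j}(z)\sin m\theta.
\]

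Next I would rewrite this in amplitude-phase form. Since $f_{m,j}(z)\neq 0$, the amplitude $R(z)=\sqrt{a_{m,j}(z)^2+b_{m,j}(z)^2}$ is strictly positive, so there is a phase $\psi(z)$ with $a_{m,j}=R\cos\psi$, $b_{m,j}=R\sin\psi$, giving
\[
u_{m,j}(z,\theta)=R(z)\cos\!\big(m\theta-\psi(z)\big).
\]
The equation $\mathrm{Re}\,f_{m,j}e_L^m(v)=0$ along the fiber then becomes $\cos(m\theta-\psi(z))=0$, i.e. $m\theta-\psi(z)\equiv \frac{\pi}{2}\ (\mathrm{mod}\ \pi)$. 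Solving for $\theta\in[0,2\pi)$, I would count the solutions: the condition $m\theta \equiv \psi(z)+\frac{\pi}{2}\ (\mathrm{mod}\ \pi)$ has exactly $2m$ distinct solutions in $\theta\in[0,2\pi)$, since as $\theta$ runs once around the circle the quantity $m\theta$ winds around $m$ times and meets each of the two residues mod $\pi$ once per half-turn. These $2m$ values of $\theta$ correspond to $2m$ distinct points $v\in L_z^*X$, which is the claim.

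The bookkeeping step that deserves the most care is making the counting frame-independent and checking that the $2m$ solutions are genuinely distinct (not coincident) points of the fiber. Distinctness is automatic because $R(z)>0$ forces the zeros of $\cos(m\theta-\psi(z))$ to be simple and equally spaced by $\pi/m$, so none collide; this is exactly where the hypothesis $f_{m,j}(z)\neq 0$ is used. I would also note that the choice of base covector $\lambda_0$ (equivalently the choice of unitary frame $\mathfrak{u}$ or local frame $e_L$) only shifts $\psi(z)$ by a constant and hence merely relabels the $2m$ solutions without changing their number, so the count $2m$ is intrinsic. I expect no genuine obstacle here: the main point is simply that a real equivariant eigenfunction restricts on each fiber to a single sinusoid of frequency $m$, whose $2m$ zeros give the fiber of the covering $\pi: Z_{\mathrm{Re}\,\hat s}\to M$ asserted in Proposition \ref{COVERPROP}.
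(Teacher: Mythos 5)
Your proof is correct and is essentially the paper's own argument: both restrict to the fiber over $z$, express $\mathrm{Re}\,\hat s$ there as a sinusoid of frequency $m$ in the fiber angle via \eqref{ubab}, and count its $2m$ zeros in $[0,2\pi)$. The only difference is cosmetic --- you use the amplitude-phase form $R(z)\cos\big(m\theta-\psi(z)\big)$ where the paper solves $\tan m\theta = a_{m,j}/b_{m,j}$ after assuming WLOG $b_{m,j}\neq 0$; your version has the minor advantage of avoiding that case split.
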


  \begin{proof} We trivialize $L_z^*$ using the dual frame $e_L^* \sim \C$ and use polar coordinates $(r, \phi)$ on $\C $.  Since the equations
  are homogeneous, we set $r=1$ and identify
$v = (\cos \theta, \sin \theta)$.
In the notation of \eqref{ubab}, the equation for a nodal point is,
$$ \left(a_{m,j} \mathfrak{c}_m -  b_{m,j} \mathfrak{s}_m\right)(\cos \theta, \sin \theta) = 0. $$
Here $\mathfrak{c}_m = \mathrm{Re} (\cos \theta + i \sin \theta)^m = \cos m \theta,$ and the equation is
\[
a_{m,j}(z) \cos m \theta -  b_{m,j}(z) \sin m \theta = 0 \iff \tan m \theta = \frac{a_{m,j}}{b_{m,j}},
\]
where $a_{m,j}, b_{m,j} \in \mathbb{R}$ and where we assume with no loss of generality that $b_{m,j} \not= 0$. For $0\leq \theta <2\pi$, we have $0\leq m\theta <2m\pi$, and so there are exactly $2m$ choices of $\theta$.
\end{proof}

The following is an immediate consequence of Lemma \ref{REG}:

\begin{lemma}
If $0$ is a regular value, then the nodal set  $\ncal_{m,j} \subset X$ of $\mathrm{Re} \phi_{m,j}$ is a singular $2m$-fold cover of $M$ with blow-down singularities over  points where $f_{m,j}(z) e_L^m = 0$.
\end{lemma}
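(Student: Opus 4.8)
The plan is to read the structure of $\ncal_{m,j}$ directly off the formula $\mathrm{Re}\,\phi_{m,j}=a_{m,j}\cos m\theta+b_{m,j}\sin m\theta$ of \eqref{ubab}, treating separately the open set $M\setminus\zcal_{f_{m,j}}$ and the complex zero set $\zcal_{f_{m,j}}=\{f_{m,j}=0\}$. The first thing I would extract from the hypothesis is its effect on the fibers over $\zcal_{f_{m,j}}$. At a point $(z_0,\theta)$ with $z_0\in\zcal_{f_{m,j}}$ one has $a_{m,j}(z_0)=b_{m,j}(z_0)=0$, so the whole fiber circle lies in $\ncal_{m,j}$ and the $d\theta$-component of $d(\mathrm{Re}\,\phi_{m,j})$ drops out, leaving $d(\mathrm{Re}\,\phi_{m,j})=\cos m\theta\,da_{m,j}+\sin m\theta\,db_{m,j}$. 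Since $0$ is a regular value this covector is nonzero for every $\theta$, and because $(\cos m\theta,\sin m\theta)$ sweeps the entire unit circle this is equivalent to the linear independence of $da_{m,j}$ and $db_{m,j}$ at $z_0$, i.e. to $0$ being a regular value of the section $f_{m,j}$. Hence $\zcal_{f_{m,j}}$ is a smooth submanifold of real codimension $2$, and near any $z_0$ I may choose coordinates $(a,b,w)\in\R^2\times\R^{n-2}$ in which $a_{m,j},b_{m,j}$ are the first two coordinates and $\zcal_{f_{m,j}}=\{a=b=0\}$.

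Away from $\zcal_{f_{m,j}}$ I would establish the covering statement, which is the ``immediate consequence'' of Lemma \ref{REG}. Writing $f_{m,j}=\rho e^{i\psi}$ locally with $\rho>0$, the nodal equation becomes $\rho\cos(m\theta-\psi)=0$, so $\partial_\theta(\mathrm{Re}\,\phi_{m,j})=-m\rho\sin(m\theta-\psi)=\mp m\rho\neq0$ at every nodal point; equivalently the system $a\cos m\theta+b\sin m\theta=0$, $-a\sin m\theta+b\cos m\theta=0$ has only the solution $a=b=0$, since its coefficient matrix is a rotation. By the implicit function theorem $\ncal_{m,j}$ is locally the graph $\theta=\theta(z)$ of a smooth function over $M$ and $\pi|_{\ncal_{m,j}}$ is a local diffeomorphism; combined with the count of exactly $2m$ nodal points per fiber from Lemma \ref{REG}, this shows that $\pi:\ncal_{m,j}\to M\setminus\zcal_{f_{m,j}}$ is a smooth $2m$-fold covering map.

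Finally I would identify the degeneration over $\zcal_{f_{m,j}}$ as a blow-down. In the local coordinates above the equation $a\cos m\theta+b\sin m\theta=0$ is solved exactly by $(a,b)=t(-\sin m\theta,\cos m\theta)$ with $t\in\R$, so $\ncal_{m,j}$ is parametrized near the fiber over $z_0$ by $(t,\theta,w)\mapsto\big(t(-\sin m\theta,\cos m\theta),w\big)$. The projection sends the exceptional circle $\{t=0\}$ over each $w\in\zcal_{f_{m,j}}$ to the single point $(0,0,w)$, while over the complement it realizes the $2m$-to-one map found above (the two signs of $t$ together with the $m$ solutions of $m\theta\equiv\psi\pm\tfrac\pi2$). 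Thus $\pi|_{\ncal_{m,j}}$ collapses the circle fibers over $\zcal_{f_{m,j}}$ to points, which is precisely a blow-down singularity, and $\ncal_{m,j}$ itself remains a smooth hypersurface in $X$ throughout: the singularities are singularities of the projection, not of $\ncal_{m,j}$.

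I expect the main obstacle to be the passage from the regular-value hypothesis on $\mathrm{Re}\,\phi_{m,j}$ to the regular-value property of $f_{m,j}$ and the resulting local normal form, together with the verification that the collapse of the circle fiber is genuinely the blow-down map rather than some other degeneration; once the normal form $(a,b)=t(-\sin m\theta,\cos m\theta)$ is in hand the covering claim away from $\zcal_{f_{m,j}}$ is routine.
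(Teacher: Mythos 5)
Your proposal is correct, and on the part of the statement that the paper actually proves it follows the same route: the $2m$ fiber zeros supplied by Lemma \ref{REG}, together with smooth local determinations of those zeros, give the $2m$-fold covering over the complement of $\zcal_{f_{m,j}}$; your justification of the ``smooth determinations'' via the implicit function theorem (computing $\partial_\theta(\mathrm{Re}\,\phi_{m,j})=-m\rho\sin(m\theta-\psi)=\mp m\rho\neq 0$ at nodal points where $f_{m,j}\neq 0$) is just a careful writing-out of what the paper's two-sentence proof asserts, and correctly notes that this half needs no regular-value hypothesis at all. Where you go genuinely beyond the paper is the blow-down assertion, which the paper's proof never addresses. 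Your key added step---that regularity of $0$ for $\mathrm{Re}\,\phi_{m,j}$, applied along the fibers over $\zcal_{f_{m,j}}$ where the $d\theta$-component of the differential drops out, forces $da_{m,j}$ and $db_{m,j}$ to be linearly independent, so that $\zcal_{f_{m,j}}$ is a smooth codimension-two submanifold and $(a_{m,j},b_{m,j},w)$ are local coordinates---is precisely the converse of the computation in the paper's subsequent smoothness lemma (there, regularity of $f_{m,j}$ is assumed and surjectivity of $Du_{m,j}$ is deduced via \eqref{eqeq11}); both directions follow from the same identity, so there is no circularity. With those coordinates your normal form $(a,b)=t(-\sin m\theta,\cos m\theta)$ exhibits $\ncal_{m,j}$ near an exceptional fiber as the image of the injective immersion $(t,\theta,w)\mapsto(-t\sin m\theta,\,t\cos m\theta,\,w,\,\theta)$, hence a smooth embedded hypersurface whose projection to $M$ collapses the circles $\{t=0\}$ to points and is $2m$-to-one off $\zcal_{f_{m,j}}$: exactly the claimed blow-down picture. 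So your argument proves the lemma in full, including the part the paper leaves unverified, and recovers the paper's later smoothness lemma as a byproduct.
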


Indeed, the $2m$ zeros of $\mathrm{Re} \omega_{m,j}(v) = 0$ in $S_z X$ give $2m$ points on the fiber $\pi^{-1}(z)$ in $P_h$. Since
locally there exist $2m$ smooth determinations of the zeros, the nodal set is a covering map away from the singular points.
This completes the proof of Proposition \ref{COVERPROP}.

We prove the connectivity statement of Theorem \ref{MAIN}.

\begin{proposition} \label{CONNECTEDPROP} 
If $\exists z_0$ such that $f_{m,j}(z_0) e_L^m=0$, then the  nodal set of $\mathrm{Re} \phi_{m, j}$ is connected. \end{proposition}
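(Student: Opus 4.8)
The plan is to exploit the single most important local feature of the nodal set: over any zero $z_0$ of the eigensection the \emph{entire} fiber $\pi^{-1}(z_0)\cong S^1$ lies in $\ncal_{m,j}$, and these full fibers serve as bridges joining the sheets of the covering of Proposition~\ref{COVERPROP}. Concretely, writing $\mathrm{Re}\,\phi_{m,j}=a_{m,j}\cos m\theta+b_{m,j}\sin m\theta$ as in \eqref{ubab}, the hypothesis $f_{m,j}(z_0)=0$ forces $a_{m,j}(z_0)=b_{m,j}(z_0)=0$, so $\mathrm{Re}\,\phi_{m,j}$ vanishes identically on $\pi^{-1}(z_0)$, a connected circle. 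This is the observation that makes the argument ``much simpler'' than in \cite{JZ20}.

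Next I would set $C:=\ncal_{m,j}\cap\pi^{-1}(M\setminus\zcal_{f_{m,j}})$, which by Lemma~\ref{REG} is the $2m$-fold covering of $M\setminus\zcal_{f_{m,j}}$. Two facts about $C$ are needed. First, since $0$ is a regular value (the standing assumption of this section), one checks that $\zcal_{f_{m,j}}$ is a codimension-$2$ submanifold, so $M\setminus\zcal_{f_{m,j}}$ is path-connected and every $z_0\in\zcal_{f_{m,j}}$ is approachable from within it. Second, near such a $z_0$ the argument of $f_{m,j}$ is locally surjective, and since the $2m$ nodal angles on a nearby fiber solve $m\theta\equiv\arg f_{m,j}-\tfrac{\pi}{2}\pmod{\pi}$, as the base point ranges over a punctured neighborhood of $z_0$ these angles sweep out all of $S^1$; hence $\overline{C}\supseteq\pi^{-1}(z_0)$, and in fact $\overline{C}=\ncal_{m,j}$, so $C$ is dense.

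The heart of the argument is then a path-lifting step. Given any $p\in C$, choose a path $\gamma:[0,1)\to M\setminus\zcal_{f_{m,j}}$ from $\pi(p)$ with $\gamma(t)\to z_0$, and lift it through the covering to $\tilde\gamma:[0,1)\to C$ with $\tilde\gamma(0)=p$. Because $X$ is compact and $\pi\circ\tilde\gamma=\gamma\to z_0$, the connected set $\overline{\tilde\gamma([0,1))}$ lies in the closed set $\ncal_{m,j}$ and meets $\pi^{-1}(z_0)$. Thus every point of $C$ lies in one connected component together with the connected fiber $\pi^{-1}(z_0)$, so $C\cup\pi^{-1}(z_0)$ is connected. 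Since this set is dense in $\ncal_{m,j}$ (by density of $C$) and $\ncal_{m,j}$ is its closure, $\ncal_{m,j}$ is connected, being the closure of a connected set.

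The step I expect to require the most care is the passage from the covering $C$ to the singular fibers: one must verify that lifts of paths approaching $z_0$ genuinely accumulate on $\pi^{-1}(z_0)$ rather than escaping, and that $\zcal_{f_{m,j}}$ really has codimension $2$ so that $M\setminus\zcal_{f_{m,j}}$ is connected with $z_0$ approachable — both resting on the regular-value hypothesis. It is worth noting \emph{why} the existence of a zero is essential: the monodromy of $C$ around $\zcal_{f_{m,j}}$ is a shift by $2w$ on the $2m$ sheets around a zero of winding number $w$, so for transversal zeros it shifts by $\pm 2$ and would leave $C$ split into even and odd sheets. It is precisely the full singular fibers $\pi^{-1}(z_0)$ that reconnect these two families, which is exactly the content of the hypothesis $\exists z_0$ with $f_{m,j}(z_0)e_L^m=0$.
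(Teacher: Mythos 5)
Your proposal is correct and takes essentially the same route as the paper: away from $\pi^{-1}(\Sigma)$ the nodal set is a covering of $M\setminus\Sigma$ (Proposition \ref{COVERPROP}), and the full circle fibers over the zeros of $f_{m,j}e_L^m$ serve as the bridges that tie all sheets into one connected set. The paper's own proof is a single sentence that only invokes the covering structure; your path-lifting/closure argument, the density of the covering part, and the monodromy-parity observation explaining why the hypothesis $\Sigma\neq\emptyset$ is genuinely needed are precisely the details it leaves implicit.
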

\begin{proof}
Let $\emptyset \neq \Sigma \subset M$ be the zero set of $f_{m, j} e_L^m$. By Proposition \ref{COVERPROP}, $ \ncal_{m, j}   \backslash (\ncal_{m, j}  \cap \pi^{-1} (\Sigma)) \to M \backslash \Sigma$
is an $2m$-sheeted cover. Because $M$ is connected, any point on $M$ is connected to a point in $\Sigma$. Therefore any point on $N_{m,j}$ is connected to a lift of a point in $\Sigma$. Because the lift of a point in $\Sigma$ is a circle, this proves the connectivity of any two points in the lift of a point in $M\backslash \Sigma$. Now, again by the connectivity of $M$, for any given $p,q\in M$, we see that any lift of a point $p$ is connected to some lift of point $q$. This concludes that any lift of $p$ is connected to any lift of $q$, which shows that $\ncal_{m,j}$ is connected.
\end{proof}

We now prove that the nodal set is smooth.

\begin{lemma}
If $0$ is a regular value of $f_{m,j}$, then $\ncal_{m,j} \subset X$ is a smooth submanifold of $X$.
\end{lemma}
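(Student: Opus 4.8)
The plan is to reduce the statement to the assertion that $0$ is a regular value of the single real-valued function $\mathrm{Re}\,\phi_{m,j}\colon X \to \R$; once that is established, $\ncal_{m,j} = (\mathrm{Re}\,\phi_{m,j})^{-1}(0)$ is a smooth submanifold of codimension one by the regular value (preimage) theorem. So it suffices to show that $d(\mathrm{Re}\,\phi_{m,j})$ is nonvanishing at every point of $\ncal_{m,j}$. I would work in a local unitary trivialization ${\mathfrak u}\colon U\to X$ with fiber coordinate $\theta$ as in Section~\ref{QUADSECT}, in which $\mathrm{Re}\,\phi_{m,j} = u_{m,j}$ and $\mathrm{Im}\,\phi_{m,j} = v_{m,j}$ are given by the explicit formulas \eqref{ubab} in terms of $a_{m,j} = \mathrm{Re}\,f_{m,j}$ and $b_{m,j} = \mathrm{Im}\,f_{m,j}$.

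First I would examine the vertical ($\theta$) derivative. Differentiating \eqref{ubab} gives $\frac{\partial}{\partial\theta}\,\mathrm{Re}\,\phi_{m,j} = m\,v_{m,j} = m\,\mathrm{Im}\,\phi_{m,j}$. Hence if $p\in\ncal_{m,j}$ is a critical point, i.e. $d(\mathrm{Re}\,\phi_{m,j})(p)=0$, then in particular this vertical component vanishes, and since $m\neq 0$ (we are treating non-invariant eigenfunctions) we get $\mathrm{Im}\,\phi_{m,j}(p)=0$. Together with $\mathrm{Re}\,\phi_{m,j}(p)=0$ this forces $\phi_{m,j}(p)=0$, equivalently $f_{m,j}(z)=0$ where $z=\pi(p)$. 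Thus every candidate critical point on the nodal set lies over the complex zero set $\Sigma=\{f_{m,j}=0\}$; over points with $f_{m,j}(z)\neq 0$ there is nothing to prove, which also matches the distinctness of the $2m$ zeros already recorded in Lemma~\ref{REG} (there one has $u_{m,j}^2+v_{m,j}^2=|f_{m,j}|^2\neq 0$, so $u_{m,j}=0$ implies $v_{m,j}\neq 0$).

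The only substantive case is then a point $p$ lying over $z\in\Sigma$. This is precisely where the projection $\pi\colon\ncal_{m,j}\to M$ degenerates: the whole fiber $\pi^{-1}(z)$ is contained in $\ncal_{m,j}$ and is blown down to the single point $z$, so one must prove smoothness by moving in the horizontal directions rather than along the fiber. Here I would invoke the hypothesis that $0$ is a regular value of $f_{m,j}$: at a zero of the section the intrinsic differential of $f_{m,j}e_L^m$ equals $(df_{m,j})\otimes e_L^m$, so regularity (frame-independent at a zero, and possible since $\dim M\geq 2$) says exactly that $da_{m,j}$ and $db_{m,j}$ are $\R$-linearly independent at $z$. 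Evaluating $d(\mathrm{Re}\,\phi_{m,j})$ at $p$ using \eqref{ubab} and $a_{m,j}(z)=b_{m,j}(z)=0$, every term carrying a factor of $a_{m,j}$ or $b_{m,j}$ (in particular the $d\theta$ term) drops out, leaving the horizontal covector $\cos(m\theta)\,da_{m,j} + \sin(m\theta)\,db_{m,j}$. Since $(\cos m\theta,\sin m\theta)\neq(0,0)$ and $da_{m,j},db_{m,j}$ are linearly independent, this covector is nonzero, contradicting $d(\mathrm{Re}\,\phi_{m,j})(p)=0$.

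Combining the two cases, $d(\mathrm{Re}\,\phi_{m,j})$ is nowhere zero on $\ncal_{m,j}$, so $0$ is a regular value of $\mathrm{Re}\,\phi_{m,j}$ and $\ncal_{m,j}$ is a smooth submanifold of $X$. The main obstacle, and the only place where the hypothesis is used, is the blow-down case over $\Sigma$: there the projection to $M$ is genuinely singular and an entire fiber circle sits inside the nodal set, which naively suggests a singularity of $\ncal_{m,j}$. The resolution is that smoothness of $\ncal_{m,j}$ as a subset of $X$ is governed not by the vertical direction but by the horizontal derivative of the section, and the regular-value hypothesis on $f_{m,j}$ is exactly what guarantees that this horizontal derivative does not vanish.
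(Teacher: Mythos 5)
Your proof is correct and takes essentially the same route as the paper: the decisive step in both is that at a point of the fiber over a zero $z_0$ of $f_{m,j}$, the horizontal part of $d(\mathrm{Re}\,\phi_{m,j})$ equals $\mathrm{Re}\bigl(e^{\pm im\theta}Df_{m,j}(z_0)\bigr)$ (up to sign conventions), which is nonzero exactly because $0$ is a regular value of $f_{m,j}$, i.e.\ $da_{m,j}$ and $db_{m,j}$ are linearly independent at $z_0$. The only cosmetic difference is that away from the zero set of the section you verify nonvanishing of the differential directly, via $\partial_\theta(\mathrm{Re}\,\phi_{m,j}) = m\,\mathrm{Im}\,\phi_{m,j}$ together with $|\phi_{m,j}| = |f_{m,j}| \neq 0$, whereas the paper invokes the covering-map structure of Proposition \ref{COVERPROP} for that case.
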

\begin{proof}
Assume that $f_{m,j}(z_0)=0$ is a regular zero. Then
\[
Df_{m,j}(z_0) : T_{z_0} (M) \to \mathbb{C}
\]
is a surjection. Now for any $0\leq \theta <2\pi$,
\[
Du_{m,j}(z_0,\theta) : T_{z_0,\theta} (X) \to \mathbb{C}
\]
restricted to the vectors tangent to $M$ is given by
\begin{equation}\label{eqeq11}
\cos m\theta Da_{m,j} (z_0) -\sin m\theta Db_{m,j} (z_0),
\end{equation}
which is the same as multiplying $Df_{m,j}(z_0)$ by $e^{im\theta}$ and then taking the real part. Since the image of $Df_{m,j}(z_0)$ is entire $\mathbb{C}$, it is clear that \eqref{eqeq11} surjects onto $\mathbb{R}$. This implies that $Du_{m,j}(z_0,\theta)$ is surjective for any $\theta$, and therefore $\ncal_{m,j} \subset S X$ is a smooth submanifold, by Proposition \ref{COVERPROP}.
\end{proof}

Finally, we complete the proof of Theorem \ref{MAIN}. Note that the existence of $z_0$ such that $f_{m,j}(z_0)=0$ follows from the nontriviality of the principal bundle.

\begin{proposition} If $0$ is a regular value of $\phi_{m, j}$ and there exists $z_0$ such that $f_{m,j}(z_0)=0$, then $SX \backslash \ncal_{m,j}$ has exactly two connected components. \end{proposition}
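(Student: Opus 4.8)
The plan is to show that each of the two sign regions $\{\mathrm{Re}\,\phi_{m,j}>0\}$ and $\{\mathrm{Re}\,\phi_{m,j}<0\}$ is connected; since these are disjoint nonempty open sets whose union is $X\setminus\ncal_{m,j}$, this yields exactly two components. The opening move is to exploit a rotation symmetry: the action $\phi_{m,j}(r_\theta x)=e^{im\theta}\phi_{m,j}(x)$ gives $r_{\pi/m}^*\phi_{m,j}=e^{i\pi}\phi_{m,j}=-\phi_{m,j}$, so $\mathrm{Re}\,\phi_{m,j}(r_{\pi/m}x)=-\mathrm{Re}\,\phi_{m,j}(x)$ and the diffeomorphism $r_{\pi/m}$ carries the positive region homeomorphically onto the negative region. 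Hence the two regions have the same number of connected components, and it suffices to prove that the positive region $P:=\{\mathrm{Re}\,\phi_{m,j}>0\}$ is connected.

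Next I would localize over the base using $\Sigma:=\zcal_{f_{m,j}}\subset M$, which is nonempty by nontriviality of the bundle. Since $0$ is a regular value of $\phi_{m,j}$ (equivalently of $f_{m,j}$, as the $\theta$-derivative of $\phi_{m,j}$ vanishes identically over $\Sigma$), the set $\Sigma$ is a smooth submanifold of codimension $2$; in particular $M\setminus\Sigma$ is connected. Over any $z_0\in\Sigma$ we have $f_{m,j}(z_0)=0$, so $\mathrm{Re}\,\phi_{m,j}\equiv 0$ on the entire fiber $\pi^{-1}(z_0)$ and $P$ contains no point over $\Sigma$. Thus $P=P\cap\pi^{-1}(M\setminus\Sigma)$, and I only need to analyze $P$ over $M\setminus\Sigma$.

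The final step uses the fiber picture of Lemma \ref{REG}. In the trivialization, $\mathrm{Re}\,\phi_{m,j}=|f_{m,j}(z)|\cos\!\big(m\theta-\arg f_{m,j}(z)\big)$, so for each $z\notin\Sigma$ the positive region meets the fiber circle in exactly $m$ disjoint open arcs, alternating with the $m$ negative arcs; by Proposition \ref{COVERPROP} the $2m$ arc-endpoints vary smoothly as a $2m$-sheeted cover, so $P\to M\setminus\Sigma$ is a genuine fiber bundle with fiber a disjoint union of $m$ arcs. Its connected components therefore correspond to the orbits of the monodromy $\pi_1(M\setminus\Sigma)\to\mathrm{Sym}(\{m\ \mathrm{arcs}\})$. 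The decisive computation is the monodromy around $\Sigma$: because $\Sigma$ is a transverse zero set, $\arg f_{m,j}$ increases by $2\pi$ along a small loop linking $\Sigma$, which translates each arc by $2\pi/m$ and hence sends the $k$-th positive arc to the $(k+1)$-st. This monodromy element is a single $m$-cycle, already transitive on the $m$ arcs, so $P$ is connected; combined with the symmetry step, $X\setminus\ncal_{m,j}$ has exactly two components.

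The main obstacle is making the monodromy step fully rigorous. One must confirm that the $m$ positive arcs really assemble into a locally trivial bundle over $M\setminus\Sigma$ — which rests on the arcs never merging, guaranteed by the $2m$ endpoints being distinct away from $\Sigma$ (Lemma \ref{REG}) and depending continuously on $z$ (Proposition \ref{COVERPROP}) — and that the winding of the phase of $f_{m,j}$ about the codimension-$2$ set $\Sigma$ is exactly $\pm1$, so that the induced permutation is a full $m$-cycle and not a product of shorter cycles. Everything else, namely the sign symmetry and the connectivity of $M\setminus\Sigma$, is formal.
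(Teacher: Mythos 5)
Your proof is correct, but it follows a genuinely different route from the paper's. The paper's own argument is three lines long: by Proposition \ref{CONNECTEDPROP} the nodal set $\ncal_{m,j}$ is connected, and by the smoothness lemma it is an embedded hypersurface; a connected embedded hypersurface can separate $X$ into at most two components; and since $\mathrm{Re}\,\phi_{m,j}$ is a nonconstant eigenfunction it has zero mean, so both sign regions are nonempty and there are at least two components. You bypass the connectivity of the nodal set entirely and instead prove directly that each sign region is connected: the rotation $r_{\pi/m}$ interchanges $\{\mathrm{Re}\,\phi_{m,j}>0\}$ and $\{\mathrm{Re}\,\phi_{m,j}<0\}$, and the positive region fibers over $M\setminus\Sigma$ as a locally trivial bundle of $m$ arcs whose monodromy around the transverse zero locus $\Sigma$ (winding number $\pm1$, forced by the regularity hypothesis) is a full $m$-cycle, hence transitive, so the positive region is connected. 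This is essentially the same covering/monodromy mechanism that underlies the paper's Proposition \ref{COVERPROP}--\ref{CONNECTEDPROP}, but transplanted from the nodal set to the nodal domains. What your route buys: it makes completely explicit where the hypothesis $\Sigma\neq\emptyset$ enters (with trivial monodromy one would get $2m$ domains, as happens for the trivial bundle), it uses only equivariance and transversality rather than the eigenfunction property (the mean-zero argument is replaced by the symmetry $r_{\pi/m}$), and it avoids invoking the topological fact about separation by hypersurfaces. What the paper's route buys: given the already-established connectivity of $\ncal_{m,j}$, no monodromy computation is needed at all, and the sign-change input is soft. Your two flagged verification points are indeed the right ones, and both go through: regularity of $0$ for $\phi_{m,j}$ forces $Df_{m,j}$ to be surjective at zeros (the vertical derivative $im\phi_{m,j}$ vanishes on $\pi^{-1}(\Sigma)$), which gives both the local triviality away from $\Sigma$ and the winding number $\pm 1$; and $M\setminus\Sigma$ is connected because $\Sigma$ has codimension two.
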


\begin{proof}
By Proposition \ref{CONNECTEDPROP}, if $0$ is a regular value then  $ \ncal_{m,j}$ is a connected, embedded hypersurface. To prove
that it separates $X$ into two connected components, note first that any connected embedded hypersurface of $X$  may either separate $X$ into two components, or $X \backslash \ncal_{m,j}$  is connected. Since $\mathrm{Re}\phi_{m,j}$ is a non-constant eigenfunction of the Laplacian, it integrates to $0$, and so there are at least two nonempty nodal domains one in $\{\mathrm{Re}\phi_{m,j}>0\}$ and the other one in $\{\mathrm{Re}\phi_{m,j}<0\}$. Therefore $\ncal_{m,j}$ separates $X$ into exactly two connected components.
\end{proof}

\section{\label{EVENDIM} Even dimensional spheres with effective  $S^1$ actions: Proof of Lemma \ref{BADSPHERE}}

The results above do not apply to even dimensional spheres, or any manifold with an effective $S^1$ action whose Euler characteristic is non-zero, since then $Z$ has zeros and $S^1$ cannot act freely. An obvious question is, what can be said of nodal sets of real parts of equivariant eigenfunctions in this case? For a  non-free  $S^1$ actions by isometries of a Riemannian manifold $(X, G)$,
the  lapse function  $|Z|$ is never constant  in this case and of course equals zero at the fixed points.  It is still the case that
 $[\Delta_G, D_Z] = 0$ and there still exists an orthonormal basis $\{\phi_{m, j}\}$ of joint (equivariant) eigenfunctions. There
 are many possible types of examples which depend, for instance, on the nature of the fixed point set of $S^1$.

 The simple reason why nodal sets are singular in even dimensions is the existence of fixed points of the $S^1$ action and  that,
 at the fixed points $p$, every equivariant eigenfunction with $m \not=0$ must
vanish, since $ \phi_{m, j}(p)  = \phi_{m,j}(r_{\theta} p) = e^{i m \theta} \phi_{m, j}(p) $.  Moreover, each fixed point is a critical point since
$d  \phi_{m, j}(p)  = d \phi_{m,j}( p)  \circ D r_{\theta} = e^{i m \theta} d \phi_{m, j}(p) $ where $D_p r_{\theta}: T_p X \to T_p X.$
Thus, each fixed point is a singular point of $\phi_{m,j}$ (a critical nodal point), i.e.  $0$ is not a regular value of any equivariant eigenfunction or of
$\mathrm{Re} \phi_{m, j}$  and obviously Theorem \ref{MAIN} does not hold for any bundle metric in this case.

We now consider in more detail  the case of even dimensional spheres $\Ss^{2n} \subset \mathbb{R}^{2n+1}$ and assume that the $S^1$ action is a one-parameter subgroup of $SO(2n+1)$
and that it has precisely two fixed points on $\Ss^{2n}$, which we assume to be $\pm e_{2n +1}$ where $e_j$ is the standard basis. We denote $N$th
degree equivariant spherical harmonics by $\phi_N^m$. These form a $\sim N^{d-2} $ dimensional subspace and to uniquely specify the harmonic
we could use an orthonormal basis of joint eigenfunctions of the Laplacian and a Cartan subgroup. But we stick to this simple but ambiguous notation,
since the the claims hold for any element of the subspace.

Let $G= SO(2n+1)$ and let $G_p$ denote the isotropy group of $p \in \Ss^{2n}$. If  $p = e_{2n+1}$, then
the $G_p$ induces a derived action on  $T_{e_{2n+1}} \Ss^{2n}$. We define $S^1$ to be a one-parameter subgroup of $G_p$ which has precisely
two fixed points $p, -p$ (in the sense that $S^1_p = S^1$ or $Z (p) = 0$).  For instance, we can let $S^1$ be the direct sum of $2 \times 2 $ blocks $\begin{pmatrix} \cos \theta & \sin \theta \\ & \\
- \sin \theta & \cos \theta \end{pmatrix}$ plus a $1\times 1$ block with entry $1$ corresponding to $e_{2n+1}$. The $S^1$ action preserves $\mathbb{R}^{2n} : = (\mathbb{R} e_{2n+1})^{\perp}$ and therefore the equatorial subsphere $\Ss^{2n-1} \subset \mathbb{R}^{2n}$. The quotient $ \Ss^{2n+1}/S^1$ is not a smooth manifold due to the fixed points, but the standard metric is bundle-like with respect to the
natural projection  $\pi: \Ss^{2n+1} \to \Ss^{2n +1}/S^1$ since the map $\pi: \Ss^{2n +1} \backslash \{p, -p\} \to  (\Ss^{2n +1}  \backslash \{p, -p\}) /S^1$ is
a smooth projection.  It also preserves all ``latitude spheres'' defined as level sets of $x_{2n +1}: \Ss^{2n} \to \mathbb{R}$
and acts freely on each. It follows that  $ \Ss^{2n} \backslash \{p, -p\} \simeq (-\frac{\pi}{2}, \frac{\pi}{2}) \times \Ss^{2n-1}$ and the quotient by the $S^1$ action
is  given by the diffeomorphism   $ \Ss^{2n} \backslash \{p, -p\} /S^1 \simeq (-\frac{\pi}{2}, \frac{\pi}{2}) \times (\Ss^{2n-1} /S^1)$. We may therefore consider
equivariant eigenfunctions in a similar spirit to the case of odd-dimensional spheres. A key difference is that $Z$ is not of constant norm, so the bundle
metric necessarily has a non-constant lapse function $N$ depending on (and only on) $x_{2n+1}$; it is given by $N(x_{2n +1}) = \sin r(x_{2n+1})$ where $r $ is the distance
of the latitude sphere of height $x_{2n+1}$ to $p$.

The free $S^1$ action on    $ \Ss^{2n} \backslash \{p, -p\} $ determines  complex line bundle $L^m \to (-\frac{\pi}{2}, \frac{\pi}{2}) \times (\Ss^{2n-1} /S^1)$
associated to characters $\chi_m$ and as in the bundle case, the equivariant eigenfunctions of $\Delta_{\Ss^{2n}}$ correspond to complex eigensections of this
line bundle for the induced operators $L_m$ \eqref{LmDEF}. Note that $(\Ss^{2n-1} /S^1) \simeq \CP^{n-1}$.

We now prove Lemma \ref{BADSPHERE}.

\begin{proof}

For $n > 1$, on  $\Ss^{2n}$ one  has a map $q: \Ss^{2m} \backslash \{\pm e_{2n+1}\} \to \Ss^{2n-1}$ where $\Ss^{2n-1}$ is the equatorial sphere, obtained by following
the orthogonal geodesics to $\Ss^{2n-1}$ to the poles. The second map is  again $x_{2n +1}: \Ss^{2n} \to (-1,1)$. Together we have a 1-parameter family of
latitude spheres and a $2n-1$-parameter family of orthogonal longitude lines. There is a third map $ \Ss^{2n} \backslash \{p, -p\} /S^1 \simeq (-\frac{\pi}{2}, \frac{\pi}{2}) \times (\Ss^{2n-1} /S^1)$ to the orbit space. Since $x_{2n+1}$ is constant on $S^1$ orbits, the third map and second map coincide when $n = 2$.

A unitary section $u:  \Ss^{2n}  \backslash \{p, -p\} /S^1  \to \Ss^{2n}$ is an inverse of $ \Ss^{2n} \to \Ss^{2n}  \backslash \{p, -p\} /S^1$ giving a cross section
to the $S^1$ action. When $n=2$ a section is given by a meridian line. This would be a section of the $SO(2n -1)$ action stabilizing the poles. Now it is given by
a family $(2n-2)$-dimensional family of   meridian lines which is the flowout of the transversal to the $S^1$ orbit in $S^*_p \Ss^{2n}$. An equivariant function
has the form $\phi_N^m(u e^{i \theta}) = e^{im \theta} \phi_N^m(u)$. The real part is $ \mathrm{Re} \phi_N^m =  u_N^m(u) \cos m \theta $, where we assume that $u_N^m(u)$
is real valued  If we consider the $2m$ zeros $\theta_{m, j}$ of $\cos m \theta$, we get the union $\bigcup_{j=1}^{2m} \{\theta_{m, j}\} \times  \Ss^{2n}  \backslash \{p, -p\} /S^1$,  disjoint union of $2m$ hypersurfaces. When $m \not=0$ the submanifolds meet at the poles.  There also exist $S^1$ invariant transverse components
of the nodal set coming from the factor $\phi_N^m(u)$.

In the notation of Section \ref{EQEIGSECT}, the $S^1$ fibers of the zero set of $\phi_N^m$  over the complex zero set $\{f^m_{N} = 0\} \subset  \Ss^{2n}  \backslash \{p, -p\} /S^1$ of the sections of $L^m$ intersect the  set  $\bigcup_{j=1}^{2m} \{\theta_{m, j}\} \times  \Ss^{2n}  \backslash \{p, -p\} /S^1$.
For $n \geq 2,$ $f_m^N$ is complex valued, and this set is  of real codimension $3$.

The  nodal set of $\phi_N^m$ is $S^1 \times \{f^m_{N} = 0\} $ and it obviously intersects
$\bigcup_{j=1}^{2m} \{\theta_{m, j}\} \times  \Ss^{2n}  \backslash \{p, -p\} /S^1$ in the set
$\bigcup_{j=1}^{2m} \{\theta_{m, j}\} \times   \{f^m_{N} = 0\} $.

This completes the proof of Lemma  \ref{BADSPHERE}.

 \end{proof}

The most familiar case $n=1$ is very special and we pause to contrast it with the case $n > 1$. In this case, the quotient is simply $(-\frac{\pi}{2}, \frac{\pi}{2})$.
The equivariant eigenfunctions form the standard basis $Y_N^m(\theta, \phi) = \sqrt{2N +1}  e^{i m \theta} P_N^m(\cos \phi)$. Here, $P_N^m$ are the standard
associated Legendre polynomials and \eqref{LmDEF} is the $m$th Legendre operator. The real part of $Y_N^m$ is $\sqrt{2N +1}  P_N^m(\cos \phi) \cos m \theta$.
Its nodal set is the union of the nodal sets $\{P_N^m = 0\} \cup \{\cos m \theta = 0\}$, and it has a singular set of $m N$ points given by the   Cartesian product of the zero set of $\cos m \phi$ and the zero set of $P_N^m(\cos \phi)$.  The complex line bundle $L_m$ is the product bundle $(-\frac{\pi}{2}, \frac{\pi}{2}) \times \C$. In the
constant frame, the eigensections are all real-valued and are the Legendre factors $  P_N^m(\cos \phi) $. The inverse image of their zero sets thus has real codimension
one rather than $2$. Over the complement of $\{P_N^m = 0\}$, the zero set of  $\mathrm{Re} Y^m_N$ is still an  $m$-fold cover of the base, indeed consists of the $m$ zeros
of $\cos m \theta$ on each latitude circle and the real  zero set consists of $m$ longitude geodesics through these zeros. The special features are that the complex
zero set of $Y_N^m$ is a union of $N$ latitude  circles rather than being of real codimension $2$ and that the $m$ longitude circles all meet at the poles. The nodal
set is connected but has $N m$ singular points and $N m$ nodal domains.


\bibliography{bibfile}
\bibliographystyle{alpha}
\end{document}